\renewcommand{\natural}{{\mathbb{N}}}
\newcommand{\real}{\ensuremath{\mathbb{R}}}
\newcommand{\until}[1]{\{1,\dots, #1\}}
\newcommand{\subscr}[2]{#1_{\textup{#2}}}
\newcommand{\supscr}[2]{#1^{\textup{#2}}}
\newcommand{\setdef}[2]{\{#1 \; | \; #2\}}
\newcommand{\diag}[1]{\operatorname{diag}(#1)}
\newcommand{\spn}{\operatorname{span}}
\newcommand{\vect}[1]{\boldsymbol{#1}}
\renewcommand{\epsilon}{\varepsilon}
\newcommand{\Pc}{{\mathcal{P}}}
\newcommand{\nulo}{\operatorname{null}}
\newcommand{\dom}{\operatorname{dom}}
\algnewcommand{\Inputs}[1]{%
	\State \textbf{Input:}
	\Statex \hspace*{\algorithmicindent}\parbox[t]{.8\linewidth}{\raggedright #1}
}
\algnewcommand{\Initialize}[1]{%
	\State \textbf{Initialize:}
	\Statex \hspace*{\algorithmicindent}\parbox[t]{.8\linewidth}{\raggedright #1}
}
\algnewcommand{\Outputs}[1]{%
	\State \textbf{Output:}
	\Statex \hspace*{\algorithmicindent}\parbox[t]{.8\linewidth}{\raggedright #1}
}
\newcommand{\distnewton}{\textsc{distributed approx-Newton}\xspace}
\newcommand{\dana}{\textsc{DANA}\xspace}
\newcommand{\distnewtondisc}{\textsc{discrete distributed approx-Newton}\xspace}
\newcommand{\danad}{\textsc{DANA-D}\xspace}
\newcommand{\distnewtoncont}{\textsc{continuous distributed approx-Newton}\xspace}
\newcommand{\danac}{\textsc{DANA-C}\xspace}
\newcommand{\distgrad}{\textsc{distributed gradient descent}\xspace}
\newcommand{\dgd}{\textsc{DGD}\xspace}
\newcommand{\longthmtitle}[1]{\mbox{}{\bf \textit{(#1).}}}
\newtheorem{thrm}{Theorem}
\newtheorem{lem}{Lemma}
\newtheorem{assump}{Assumption}
\newtheorem{rem}{Remark}
\DeclareMathOperator{\N}{\mathcal{N}}
\DeclareMathOperator{\E}{\mathcal{E}}
\DeclareMathOperator{\G}{\mathcal{G}}
\DeclareMathOperator{\Ls}{\mathscr{L}}
\newcommand{\nodes}{\N}
\newcommand{\ones}[1][]{\vect{1}_{#1}}
\newcommand{\zeros}[1][]{\vect{0}_{#1}}
\newcommand{\setdefB}[2]{\Big\{#1 \; | \; #2\Big\}}
\newcommand\oprocendsymbol{\hbox{$\bullet$}}
\newcommand\oprocend{\relax\ifmmode\else\unskip\hfill\fi\oprocendsymbol}
\begin{document}

\begin{frontmatter}

\title{Distributed Approximate Newton Algorithms and Weight Design for Constrained
	Optimization}

\author{Tor Anderson\quad}
\author{Chin-Yao Chang\quad}
\author{Sonia Mart{\'\i}nez}

\thanks{A preliminary version appeared at the 2017 Conference on Control Technology and Applications~\cite{TA-CYC-SM:17a}.	Tor Anderson and Sonia
	Mart{\'\i}nez are with the Department of Mechanical and Aerospace
	Engineering, University of California, San Diego, CA, USA, and Chin-Yao Chang is at the National Renewable Energy Laboratory in Golden, CO, USA. Email: {\small {\tt \{tka001, soniamd\}@eng.ucsd.edu}}, {\small {\tt chinyao.chang@nrel.gov}}.
	This research was supported by the Advanced Research Projects Agency - Energy under the NODES program, Cooperative Agreement DE-AR0000695.}

\begin{keyword}                           
distributed optimization; multi-agent systems; resource allocation; networked systems; second-order methods.               
\end{keyword}                             

\begin{abstract}                          Motivated by economic dispatch and linearly-constrained resource
	allocation problems, this paper proposes a class of novel
	$\distnewton$ algorithms that approximate the standard Newton
	optimization method. We first develop the notion of an optimal edge
	weighting for the communication graph over which agents implement
	the second-order algorithm, and propose a convex approximation for
	the nonconvex weight design problem.
	We next build on the optimal weight design to develop a
	\distnewtondisc algorithm which converges linearly
	to the optimal solution for economic dispatch problems with unknown
	cost functions and relaxed local box constraints. For the full
	box-constrained problem, we develop a \distnewtoncont algorithm
	which is inspired by first-order saddle-point methods and rigorously
	prove its convergence to the primal and dual optimizers. A main
	property of each of these distributed algorithms is that they only
	require agents to exchange constant-size communication messages,
	which lends itself to scalable implementations. Simulations
	demonstrate that the $\distnewton$ algorithms with our weight design
	have superior convergence properties compared to existing weighting
	strategies for first-order saddle-point and gradient descent
	methods.
\end{abstract}

\end{frontmatter}

\section{Introduction}
\textit{Motivation.}  Networked systems endowed with distributed,
multi-agent intelligence are becoming pervasive in modern
infrastructure systems such as power, traffic, and large-scale
distribution networks. However, these advancements lead to new
challenges in the coordination of the multiple agents operating the
network, which are mindful of the network dynamics, and subject to
partial information and communication constraints. 
To this end, distributed convex optimization is a rapidly emerging
field which seeks to develop useful algorithms to manage network
resources in a scalable manner. 
Motivated by the rapid emergence of distributed energy resources, a
problem that has recently gained large attention is that of economic
dispatch.  In this problem, a total net load constraint must be
satisfied by a set of generators which each have an associated cost of
producing electricity. 
However, the existing distributed techniques to solve this problem are
often limited by rate of convergence.  Motivated by this,
here 
we investigate the design of topology
weighting strategies that build on the Newton method and lead to
improved convergence rates. \\
\textit{Literature Review.}  The Newton method for minimizing a
real-valued multivariate objective function is well characterized for
centralized contexts in~\cite{SB-LV:04}.  Another centralized method
for solving general constrained convex problems by seeking the
saddle-point of the associated Lagrangian is developed
in~\cite{AC-EM-SHL-JC:18-tac}. This method, which implements a
saddle-point dynamics
is attractive because its convergence properties can be established.
Other first-order or primal-dual based methods for approaching distributed optimization include~\cite{EM-CZ-SL:17,TD-CB:17,RC-GN:13,SHM-MJ:18}.
However, these methods typically do not incorporate second-order
information of the cost function, which compromises convergence speeds.  The notion of computing an
\emph{approximate} Newton direction in distributed contexts has gained
popularity recently, such as~\cite{AM-QL-AR:17}
and~\cite{EW-AO-AJ:13P1,EW-AO-AJ:13P2}.
In the former work, the authors propose a method which uses the Taylor
series expansion for inverting matrices. However, it assumes that each
agent keeps an estimate of the entire decision variable, which does
not scale well in problems where this variable dimension is equal to
the number of agents in the network. Additionally, the optimization is
unconstrained, which helps to keep the problem decoupled but is
narrower in scope.  The latter works pose a separable optimization
with an equality constraint characterized by the incidence matrix. The
proposed method may be not directly applied to networks with
constraints that involve the information of all agents.  The
papers~\cite{DJ-JX-JM-14,FZ-DV-AC-GP-LS:16,RC-GN-LS-DV:15}
incorporate multi-timescaled dynamics together with a dynamic
consensus step to speed up the convergence of the agreement
subroutine.  These works only consider uniform edge weights, while
sophisticated design of the weighting may improve the
convergence. In~\cite{LX-SB:06}, the Laplacian weight design problem
for separable resource allocation is approached from a $\distgrad$
perspective. Solution post-scaling is also presented, which can be
found similarly in~\cite{MM-RT:07} and~\cite{YS:03} for improving the
convergence of the Taylor series expression for matrix inverses.
In~\cite{SYS-MA-LEG:10}, the authors consider edge weight design to
minimize the spectrum of Laplacian matrices. However, in the Newton
descent framework, the weight design problem formulates as a nonconvex
bilinear problem, which is challenging to solve.  Overall, the current
weight-design techniques that are computable in polynomial time are
only mindful of first-order algorithm dynamics. A second-order
approach has its challenges, which manifest themselves in a bilinear
design problem and more demanding communication requirements, but
using second-order information is more heedful of the problem geometry
and leads to faster convergence speeds.

\textit{Statement of Contributions.}  In this paper, we propose a
novel framework to design a weighted Laplacian matrix that is used in
the solution to a multi-agent optimization problem via sparse
approximated Newton algorithms. Motivated by economic dispatch, we
start by formulating a separable resource allocation problem subject
to a global linear constraint and local box constraints, and then
derive an equivalent form without the global constraint by means of a
Laplacian matrix, which is well suited for a distributed framework. We
use this to motivate weighting design of the elements of the Laplacian
matrix and formulate this problem as a bilinear optimization. We
develop a convex approximation of this problem whose solution can be
computed offline in polynomial time. A bound on the \emph{best-case}
solution of the original bilinear problem is also given.

We aim to bridge the gap between classic Newton and $\distnewton$
methods. To do this, we first relax the box constraints and develop a
class of constant step-size discrete-time algorithms.  The Newton
step associated with the unconstrained optimization problem do not
inherit the same sparsity as the distributed communication network. To
address this issue, we consider approximations based on a Taylor
series expansion, where the first few terms inherit certain level of
sparsity as prescribed by  the Laplacian matrix.  We analyze
the approximate algorithms and show their convergence for any
truncation of the series expansion.

We next study the original problem with local box constraints, which
has never been considered in the framework of a distributed Newton
method,
and present a novel continuous-time $\distnewton$ algorithm. The
convergence of this algorithm to the optimizer is rigorously studied
and we give an interpretation of the convergence in the Lyapunov
function sense. Furthermore, through a formal statement of the
proposed $\distnewton$ algorithm (or $\dana$), we find several
interesting insights on second-order distributed methods. We compare
the results of our design and algorithm to a generic weighting design
of $\distgrad$ ($\dgd$) implementations in
simulation. Our weighting design shows superior convergence to $\dgd$.

\textit{Organization.} The rest of the paper is organized as
follows. Section~\ref{sec:prelim} introduces the notations and
fundamentals used in this paper. We formulate the optimal resource
allocation problem in
Section~\ref{sec:prob-statement}. Section~\ref{sec:laplacian-design}
proposes the optimal graph weighting design for a second order method
and develops a convex approximation to compute a satisfactory
solution. In Section~\ref{sec:discrete-red}, we propose a distributed
algorithm that approximates the Newton step in solving the
optimization. Section~\ref{sec:sims-discuss} demonstrates the
effectiveness of the proposed algorithm. We conclude the paper in
Section~\ref{sec:conclusion}.

\section{Preliminaries}
\label{sec:prelim}
This section compiles notation and presents a few results that will be
used in the sequel.
\subsection{Notation}
Let $\real$ and $\real_{+}$ denote the set of real and positive real
numbers, respectively, and let $\natural$ denote the set of natural
numbers.  For a vector ${x \in \real^n}$, we denote by $x_i$ the
$\supscr{i}{th}$ entry of $x$.  For a matrix ${A \in\real^{n \times
		m}}$, we write $A_i$ as the $\supscr{i}{th}$ row of $A$ and
$A_{ij}$ as the element in the $\supscr{i}{th}$ row and
$\supscr{j}{th}$ column of $A$. Discrete time-indexed variables are
written as $x^k$, where $k$ denotes the current time step. The
transpose of a vector or matrix is denoted by $x^\top$ and $A^\top$,
respectively. We use the shorthand notations ${\mathbf{1}_n = [1,
	\dots, 1]^\top \in \real^n}$,
${\zeros[n] = [0, \dots, 0]^\top \in \real^n}$, and $I_n$
to denote the ${n \times n}$ identity matrix. The standard inner
product of two vectors $x,y \in \real^n$ is written $\langle x,
y\rangle$, and $x \perp y$ indicates $\langle x, y\rangle = 0$. For a
real-valued function ${f : \real^n \rightarrow \real}$, the gradient
vector of $f$ with respect to $x$ is denoted by ${\nabla_x f(x)}$ and
the Hessian matrix with respect to $x$ by ${\nabla_{xx} f(x)}$.  The
positive (semi) definiteness and negative (semi) definiteness of a
matrix ${A \in \real^{n \times n}}$ is indicated by ${A \succ 0}$ and
${A \prec 0}$ (resp. ${A \succeq 0}$ and ${A \preceq 0}$). The same
symbols are used to indicate componentwise inequalities on vectors of
equal sizes. The set of eigenvalues of a symmetric matrix ${A \in
	\real^{n \times n}}$ is ordered as $\mu_1(A) \leq \dots \leq \mu_n(A)$
with
associated eigenvectors $v_1, \dots , v_n \in \real^n$.  An orthogonal
matrix ${T \in \real^{n \times n}}$ has the property ${T^\top T = T
	T^\top = I_n}$ and ${T^\top = T^{-1}}$. For a finite set
$\mathcal{S}$, $\vert \mathcal{S} \vert$ is the cardinality of the
set. The uniform distribution on the interval $[a, b]$ is indicated by
$\mathcal{U} [a, b]$. We define the projection
\begin{equation*}
\left[ u \right]^+_v := \begin{cases}
u, & v > 0, \\
\text{max}\{0,u\}, & v \leq 0.
\end{cases}
\end{equation*}
\subsection{Graph Theory}
A network of agents is represented by a graph $\G =
(\nodes,\mathcal{E})$, assumed undirected, with a node set $\nodes =
\until{n}$ and edge set $\mathcal{E} \subseteq \nodes \times
\nodes$. The edge set $\mathcal{E}$ has elements $(i,j) \in
\mathcal{E}$ for $j \in \nodes_i$, where $\nodes_i \subset \nodes$ is
the set of neighbors of agent $i \in \nodes$.  The union of neighbors
to each agent $j \in \nodes_i$ are the 2-hop neighbors of agent $i$,
and denoted by $\nodes_i^2$. More generally, $\nodes_i^p$, or set of
$p$-hop neighbors of $i$, is the union of neighbors of agents in
$\nodes_i^{p-1}$. We consider \emph{weighted} edges for the sake of defining the graph Laplacian; the role of edge weightings and the associated design problem in this paper is described in Section~\ref{sec:laplacian-design}. The graph $\G$ then has a \emph{weighted} Laplacian ${L
	\in \real^{n \times n}}$ defined as
\begin{equation*}
L_{ij} =
\begin{cases}
-w_{ij}, & j \in \nodes_i, j \neq i, \\
w_{ii}, & j = i, \\
0, & \text{otherwise},
\end{cases}
\end{equation*}
with weights $w_{ij} = w_{ji} > 0, \forall j \neq i$, and total
incident weight $w_{ii}$ on~$i\in \nodes$, ${w_{ii} = \sum_{j \in
		\nodes_i} w_{ij}}$. Evidently, $L$ has an eigenvector ${v_1 =
	\ones[n]}$ with an associated eigenvalue ${\mu_1 = 0}$, and ${L
	= L^\top \succeq 0}$. The graph is connected i.f.f. $0$ is a
simple eigenvalue, i.e.~$0 = \mu_1 < \mu_2 \leq \dots \leq \mu_n$.

The Laplacian $L$ can be written via its incidence matrix
${E \in \{-1,0,1\}^{\vert \mathcal{E} \vert \times n}}$ and a diagonal
matrix ${X \in \real_{+}^{\vert \mathcal{E} \vert \times \vert
		\mathcal{E} \vert}}$ whose entries are weights $w_{ij}$. Each
row of $E$ is associated with an edge $(i,j)$ whose $\supscr{i}{th}$
element is $1$, $\supscr{j}{th}$ element is $-1$, and all other
elements zero.  Then, $L = E^\top X E$.

\subsection{Schur Complement}
The following lemma will be used in the sequel.
\begin{lem}
	\cite{FZ:05}\longthmtitle{Matrix Definiteness via Schur
		Complement}\label{lem:schur-comp}
	Consider a symmetric matrix $M$ of the form
	\begin{equation*}
	M = \begin{bmatrix}
	A & B \\ B^\top & C
	\end{bmatrix}.
	\end{equation*}
	If $C$ is invertible, then the following properties hold: \\
	(1) $M \succ 0$ if and only if $C \succ 0$ and $A - BC^{-1}B^\top \succ 0$. \\
	(2) If $C \succ 0$, then $M \succeq 0$ if and only if $A - BC^{-1}B^\top \succeq 0$.
\end{lem}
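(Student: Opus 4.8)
The plan is to diagonalize $M$ by a congruence transformation built from the invertibility of $C$, and then read the definiteness of $M$ directly off the two resulting diagonal blocks. Writing $S := A - BC^{-1}B^\top$ for the Schur complement, the first step is to introduce the unit upper-triangular matrix
\begin{equation*}
P = \begin{bmatrix} I & -BC^{-1} \\ 0 & I \end{bmatrix},
\end{equation*}
which is invertible since $\det P = 1$, and to verify by direct block multiplication that
\begin{equation*}
P M P^\top = \begin{bmatrix} S & 0 \\ 0 & C \end{bmatrix}.
\end{equation*}
This single computation --- in which the $(1,2)$ and $(2,1)$ blocks cancel and the $(1,1)$ block collapses to $S$ --- is the only algebra in the argument and is routine.

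Next I would use that congruence by an invertible matrix preserves (semi)definiteness. Indeed, for symmetric $M$ and invertible $P$ one has $z^\top (P M P^\top) z = (P^\top z)^\top M (P^\top z)$ for all $z$, and since $z \mapsto P^\top z$ is a bijection, the right-hand side is positive (resp. nonnegative) for all nonzero $z$ exactly when $M$ is positive (resp. positive semi)definite; hence $M \succ 0 \iff P M P^\top \succ 0$ and $M \succeq 0 \iff P M P^\top \succeq 0$. It remains to observe that the definiteness of a block-diagonal symmetric matrix decouples across its blocks: $\begin{bmatrix} S & 0 \\ 0 & C \end{bmatrix} \succ 0$ holds if and only if $S \succ 0$ and $C \succ 0$, and the analogous statement holds with $\succeq$ throughout.

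Combining these facts, part (1) is immediate: $M \succ 0$ if and only if $S \succ 0$ and $C \succ 0$. For part (2), the hypothesis $C \succ 0$ makes the block condition $C \succeq 0$ automatic, so $M \succeq 0$ reduces to $S \succeq 0$ alone, as claimed.

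I do not expect a genuine obstacle here, as this is a classical result; the only points requiring care are getting the directions of the biconditionals right and recognizing the precise role of the strict hypothesis $C \succ 0$ in part (2), which is exactly what eliminates the $C$-block from the semidefinite condition so that $M \succeq 0$ is governed by the Schur complement alone. A self-contained alternative that avoids naming $P$ would complete the square in the scalar form $z^\top M z = x^\top A x + 2 x^\top B y + y^\top C y$ by minimizing over $y$ (its minimizer being $y = -C^{-1}B^\top x$, with minimum value $x^\top S x$), but the congruence route is cleaner and handles both the definite and semidefinite cases uniformly.
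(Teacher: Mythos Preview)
Your argument is correct and is the standard congruence proof of the Schur complement lemma. Note, however, that the paper does not supply its own proof of this statement: the lemma is quoted with a citation to~\cite{FZ:05} and used as a black box, so there is no in-paper proof to compare against. Your write-up would serve perfectly well as a self-contained justification if one were desired.
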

\subsection{Taylor Series Expansion for Matrix Inverses} \label{ssec:Taylor}

A full-rank matrix $A \in \real^{n \times n}$ has a matrix inverse,
$A^{-1}$, which is characterized by the relation $AA^{-1} = I_n$. In
principle, it is not straightforward to compute this inverse via a
distributed algorithm.  However, if the eigenvalues of $A$ satisfy
$\vert 1 - \mu_i(A) \vert < 1, \forall \,i \in \mathcal{N}$, then we
can employ the Taylor expansion to compute its inverse~\cite{GS:98}:
\begin{align*}
A^{-1} = \sum_{p=0}^\infty (I_n - A)^p.
\end{align*}

To quickly see this holds, substitute $B = I_n - A$, multiply both sides by
$I_n - B$ and reason with $\lim_{p\rightarrow\infty}$.
Note
that, if the sparsity structure of $A$ represents a network topology,
then traditional matrix inversion techniques such as Gauss-Jordan
elimination still necessitate all-to-all communication. However,
agents can communicate and compute locally to obtain each term in the
previous expansion.  If $A$ is normal, it can be seen via the diagonalization of $I_n -
A$ that the terms of the sum become small as $p$ increases due to the
assumption on the eigenvalues of $A$~\cite{SF-AI-LS:03}. The convergence of these terms
is exponential and limited by the slowest converging mode,
i.e.~$\max{\vert 1 - \mu_i(A) \vert}$.

We can compute an approximation of $A^{-1}$ in finite steps by
computing and summing the terms up to the $\supscr{q}{th}$ power.  We
refer to this approximation as a \textit{q-approximation} of $A^{-1}$.

\section{Problem Statement} \label{sec:prob-statement} Motivated by
the economic dispatch problem, in this section we pose the separable
resource allocation problem that we aim to solve distributively. We
reformulate it as an unconstrained optimization problem whose decision
variable is in the span of the graph Laplacian, and motivate the
characterization of a second-order Newton-inspired method.

Consider a group of agents $\nodes$, indexed by $i \in \nodes$, and a
communication topology given by $\G$. Each agent is associated with a
local convex cost function $f_i : \real \rightarrow \real$. These
agents can be thought of as generators in an electricity market, where
each function argument $x_i \in \real$, $i \in \nodes$ represents the
power that agent $i$ produces at a cost characterized by $f_i$. The
economic dispatch problem aims to satisfy a global load-balancing
constraint $\sum_{i=1}^n x_i = d$ for minimal global cost $f : \real^n
\rightarrow \real$, where $d$ is the total demand. In addition, each
agent is subject to a local linear box constraint on its decision
variable given by the interval
$[\underline{x}_i,\overline{x}_i]$. Then, the economic dispatch
optimization problem is stated as:
\begin{subequations} \label{eq:p1-opt}
	\begin{align}
	\Pc 1: \ & \underset{x}{\text{min}}
	& & f(x) = \sum_{i=1}^n f_i(x_i) \label{eq:p1-cost}\\
	& \text{subject to}
	& & \sum_{i=1}^n x_i = d, \label{eq:p1-resource-const}\\
	&&& \underline{x}_i \leq x_i \leq \overline{x}_i, \quad i =
	\until{n}. \label{eq:p1-box-const}
	\end{align}
\end{subequations}
Distributed optimization algorithms based on a gradient descent
approach to solve $\Pc 1$ are available~\cite{MZ-SM:15-book}.
However, by only taking into account first-order information of the
cost functions, these methods tend to be inherently slow. As for a
Newton (second-order) method, the constraints make the computation of
the descent direction non-distributed. To see this, consider
only~\eqref{eq:p1-cost}--\eqref{eq:p1-resource-const}.  Recall the
unconstrainted Newton step defined as $\subscr{x}{nt} := -\nabla_{xx}
f(x)^{-1}\nabla_x f(x)$, see e.g.~\cite{SB-LV:04}. In this context,
the equality constraint can be eliminated by imposing ${x_n = d -
	\sum_{i=1}^{n-1} x_i}$. Then, \eqref{eq:p1-cost} becomes ${f(x) =
	\sum_{i=1}^{n-1} f_i(x_i) + f_n(d - \sum_{i=1}^{n-1} x_i)}$. In
general, the resulting Hessian $\nabla_{xx} f(x)$ is fully populated
and its inverse requires all-to-all communication among agents in
order to compute the second-order descent direction. If we
additionally consider~\eqref{eq:p1-box-const}, interior point methods
are often employed, such as introducing a log-barrier function to the
cost in~\eqref{eq:p1-cost}\cite{SB-LV:04}. The value of the
log-barrier parameter is updated online to converge to a
feasible solution, which exacerbates the non-distributed nature of
this approach. This motivates the design of distributed Newton-like
methods which are cognizant
of~\eqref{eq:p1-resource-const}--\eqref{eq:p1-box-const}.

We eliminate \eqref{eq:p1-resource-const} by introducing a 
network topology as encoded by a Laplacian matrix $L$ associated with $\G$ 
and an initial condition $x^0 \in \real^n$ with some
assumptions.
\begin{assump}\longthmtitle{Undirected and Connected Graph} 	\label{ass:conn-graph}
	The weighted graph characterized by $L$ is undirected and connected,
	i.e. $L = L^\top$ and $0$ is a simple eigenvalue of $L$.
\end{assump}
\begin{assump}\longthmtitle{Feasible Initial
		Condition} \label{ass:initial}
	The initial state $x^0$ satisfies~\eqref{eq:p1-resource-const}, i.e.
	\begin{equation*}
	\sum_{i=1}^n x_i^0 = d.   
	\end{equation*}
\end{assump}
If the problem context does not lend itself well to satisfying
Assumption~\ref{ass:initial}, there is a distributed algorithmic
solution to rectify this via dynamic consensus that can be found
in~\cite{AC-JC:16-auto} which could be modified for a Newton-like
method. Given these assumptions, $\Pc 1$ is equivalent to:
\begin{subequations} \label{eq:p2-opt}
	\begin{align}
	\Pc 2: \
	& \underset{z}{\text{min}}
	& & f(x^0 + Lz) = \sum_{i=1}^n f_i(x^0_i + L_i z) \label{eq:p2-cost}\\
	& \text{subject to}
	& & \underline{x} - x^0 - Lz \preceq \zeros[n], \label{eq:p2-box-const1} \\
	&&& x^0 + Lz -\overline{x} \preceq \zeros[n] . \label{eq:p2-box-const2}
	\end{align}
\end{subequations}
Using the property that $\ones[n]$ is an eigenvector of $L$
associated with the eigenvalue $0$, we have that $\ones[n]^\top
(x^0 + Lz) = d$.  Newton descent for centralized solvers is given
in~\cite{SB-LV:04}; in our distributed framework, the row space of the
Laplacian is a useful property to
address~\eqref{eq:p1-resource-const}.

\begin{rem}\longthmtitle{Relaxing Assumption~\ref{ass:initial}}\label{rem:robust}
  The assumption on the initial condition can render the formulation
  vulnerable to implementation errors and cannot easily accommodate
  packet drops in a distributed algorithm. A potential workaround for
  this is outlined here. Consider, instead
  of~\eqref{eq:p1-resource-const} in $\Pc 1$, the $n$ linear
  constraints:
	\begin{equation}\label{eq:dist-equal}
	x + Lz = \overline{d},
	\end{equation}
	where $\overline{d}\in\real^n , \ones[n]^{\top} \overline{d} =
        d$ and~\eqref{eq:p1-resource-const} can be recovered by
        multiplying~\eqref{eq:dist-equal} from the left by
        $\ones[n]$. (As an aside, it may be desirable to impose
        sparsity on $\overline{d}$ so that only some agents need
        access to global problem data). Both $x\in\real^n$ and
        $z\in\real^n$ become decision variables, and agent $i$ can
        verify the $\supscr{i}{th}$ component of~\eqref{eq:dist-equal}
        with one-hop neighbor information. Further, a distributed
        saddle-point algorithm can be obtained by assigning a dual
        variable to~\eqref{eq:dist-equal} and proceeding as
        in~\cite{AC-EM-SHL-JC:18-tac}.
	
	We provide a simulation justification for this approach in
        Section~\ref{ssec:sims-robust-dana}, although the analysis of
        robustness to perturbations and packet drops is ongoing and
        outside the scope of this paper. For now we strictly impose
        Assumption~\ref{ass:initial}. 
\end{rem}

We aim to leverage the freedom given by the elements of $L$ in order
to compute an approximate Newton direction to $\Pc 2$. To this end, we
adopt the following assumption.
\begin{assump}\longthmtitle{Cost Functions}\label{ass:cost}
	The local costs $f_i$ are twice continuously differentiable and
	strongly convex with bounded second-derivatives given by
	\begin{equation*}
	0 < \delta_i \leq \dfrac{\partial^2 f_i}{\partial x_i^2} \leq \Delta_i,
	\end{equation*}
	for every $i\in\nodes$ with given $\delta_i, \Delta_i\in\real_+$.
\end{assump}
This assumption is common in other distributed Newton or Newton-like
methods, e.g.~\cite{AM-QL-AR:17,DJ-JX-JM-14} and in classical convex
optimization~\cite{SB-LV:04,YN:13}. Assumption~\ref{ass:cost} is
necessary to attain convergence in our computation of the Newton
step/direction and to construct the notion of an optimal edge
weighting $L$.
We adopt the shorthands $H(x) := \nabla_{xx}
f(x)$,
$H_\delta := \diag{\delta}$, and $H_\Delta := \diag{\Delta}$ as the
diagonal matrices with elements given by $\partial^2
	f_i(x_i)/ \partial x_i^2$, $\delta_i$, and $\Delta_i$,
respectively. 

Next, for the purpose of developing a distributed Newton-like method,
we must slightly rethink the idea of inverting a Hessian matrix. By
application of the chain rule, we have that $\nabla_{zz}f(x^0 + Lz) =
LH(x^0 + Lz)L$. Clearly, $\nabla_{zz}f$ is non-invertible due to the
smallest eigenvalue of $L$ fixed at zero, a manifestation of the
equality constraint in the original problem $\Pc 1$. We instead focus
on the $n-1$ nonfixed eigenvalues of $\nabla_{zz}f$ to employ the Taylor
expansion outlined in Section~\ref{ssec:Taylor}. To this end,
we project $LH(x^0 + Lz)L$ to the $\real^{(n-1)\times (n-1)}$ space with a
coordinate transformation; the justification for this and relation to
the traditional Newton method are made explicitly clear in
Section~\ref{sec:discrete-red}. We seek a matrix $T\in\real^{n\times n}$ satisfying $T^\top T = I_n - \ones[n]\ones[n]^\top /n$~\cite{SF-AI-LS:03}; the particular matrix $T$ we employ is given as
\begin{equation*}
T\hspace{-1mm} = \hspace{-1mm}\begin{bmatrix}
n\hspace{-1mm} -\hspace{-1mm} 1 + \hspace{-1mm}\sqrt{n} 
& -1 & \cdots & -1 & \dfrac{1}{\sqrt{n}} \\
-1 & \ddots & \cdots & \vdots & \\
\vdots & & \ddots & -1 & \vdots \\
-1 & \cdots & -1 & n\hspace{-1mm} -\hspace{-1mm} 1 + \hspace{-1mm}\sqrt{n} & \\
-1 - \sqrt{n} & \cdots & \cdots & -1 - \sqrt{n} & \dfrac{1}{\sqrt{n}}
\end{bmatrix}\hspace{-1mm}\diag{\hspace{-.5mm}\begin{bmatrix}
	\rho \\ 1
	\end{bmatrix}\hspace{-.5mm}},
\end{equation*}
where $\rho = \sqrt{n(n+1+2\sqrt{n})}^{-1}\ones[n-1]$. This choice of $T$ has the effect of projecting the null-space of the Hessian onto the $\supscr{n}{th}$ row and $\supscr{n}{th}$ column, which is demonstrated by defining
$M(x):= JT^\top LH(x)L TJ^\top \in \real^{(n-1)\times (n-1)}$, where $J
= \begin{bmatrix} I_{n-1} & \zeros[n-1]\end{bmatrix}$. The matrix
$M(x)$ shares its $n-1$ eigenvalues with the $n-1$ nonzero eigenvalues
of $LH(x)L$ at each $x$, and $M(x)^{-1}$ is well defined, which provides
us with a concrete notion of an inverse Hessian. We now adopt the
following assumption.
\begin{assump}\longthmtitle{Convergent Eigenvalues}\label{ass:e-vals}
  For any $x$, the eigenvalues of $I_{n-1} - M(x)$, corresponding to
  the $n-1$ smallest eigenvalues of $I_{n} - LH(x)L$, are contained in
  the unit ball, i.e. $\exists \ \varepsilon < 1$ such
  that
	\begin{equation*}
	-\varepsilon I_{n-1} \preceq I_{n-1} - M(x) \preceq \varepsilon I_{n-1}.
	\end{equation*}
\end{assump}
Technically speaking, we are only concerned with arguments of $M$
belonging to the $n-1$ dimensional hyperplane $\setdef{x^0 +
	Lz}{z\in\real^n}$, although we consider all $x\in\real^n$ for
simplicity. In the following section, we address
Assumption~\ref{ass:e-vals} (Convergent Eigenvalues) 
by minimizing $\epsilon$ via weight design of the Laplacian. By doing
this, we aim to obtain a good approximation of $M^{-1}$ from the
Taylor expansion with small $q$, which lends itself well to the
convergence of the distributed algorithms in
Sections~\ref{sec:discrete-red} and~\ref{sec:cont-time-dana}.
\section{Weight Design of the Laplacian}\label{sec:laplacian-design}
In this section, we pose the nonconvex weight design
problem on the elements of $L$, which formulates as a bilinear optimization to be solved by a central authority.
To make this problem tractable, we develop a convex approximation and
demonstrate that the solution is guaranteed to satisfy
Assumption~\ref{ass:e-vals}. Next, we provide a lower bound on the
solution to the nonconvex
problem. This gives a measure of performance for
evaluating our approximation.
\subsection{Formulation and Convex Approximation}\label{sec:topology-design}
Our approach hearkens to the intuition on the rate of convergence of
the $q$-approximation of $M(x)^{-1}$. We design a weighting scheme for a
communication topology characterized by $L$ which lends itself to a
scalable, fast approximation of a Newton-like direction. To this end,
we minimize $\underset{i,x}{\max} \vert 1 - \mu_i(M(x)) \vert$:
\begin{subequations}		
	\begin{align}
	\Pc 3: \quad
	& \underset{\varepsilon, L}{\text{min}}
	&&\varepsilon \\
	& \text{s.t.} 
	&&\hspace*{-0.3cm}-\varepsilon I_{n-1} \preceq I_{n-1} - M(x) \preceq
	\varepsilon I_{n-1}, \forall x,\label{eq:eval-prob-BMI} \\
	&&& L \ones[n] = \zeros[n], \ L \succeq 0, \ L = L^\top, \\
	&&& L_{ij} \leq 0, j\in \nodes_i, \ L_{ij} = 0,\, j \notin \nodes_i.
	\end{align}
\end{subequations}
Naturally, $\Pc 3$ must be solved offline by a central authority because it requires complete information about the local Hessians embedded in $M(x)$, in addition to being a semidefinite program for which distributed solvers are not mature. Even for a centralized solver $\Pc 3$ is hard for a
few reasons, the first being that~\eqref{eq:eval-prob-BMI}
is a function over all possible $x \in \real^n$. To reconcile with
this, we invoke Assumption~\ref{ass:cost} on the cost functions and
write $M_\delta = JT^\top LH_\delta L TJ^\top$ and $M_\Delta = JT^\top
LH_\Delta L TJ^\top$. Then,~\eqref{eq:eval-prob-BMI} is equivalent to
\begin{subequations}
	\begin{align}
	-(\varepsilon_-  + 1)I_{n-1} + M_\delta &\preceq 0, \label{eq:BMI-epminus} \\
	(1 - \varepsilon_+)I_{n-1} - M_\Delta &\preceq 0, \label{eq:BMI-epplus} \\
	\varepsilon_- = \varepsilon_+, \label{eq:BMI-equal}
	\end{align}
\end{subequations}
where the purpose of introducing $\varepsilon_-$ and $\varepsilon_+$
will become clear in the discussion that follows.

The other difficult element of $\Pc 3$ is the nonconvexity stemming
from~\eqref{eq:BMI-epminus}--\eqref{eq:BMI-epplus} being bilinear in
$L$. 
There are path-following techniques available to solve bilinear
problems of this form~\cite{AH-JH-SB:99}, but simulation results do
not produce satisfactory solutions for problems of the form $\Pc
3$. Instead, we aim to develop a convex approximation of $\Pc 3$ which
exploits its structure. Consider~\eqref{eq:BMI-epminus}
and~\eqref{eq:BMI-epplus} separately by
relaxing~\eqref{eq:BMI-equal}. In fact,~\eqref{eq:BMI-epminus} may be
rewritten in a convex manner. To do this, write $L$ as a weighted
product of its incidence matrix, $L = E^\top X E$. Applying
Lemma~\ref{lem:schur-comp} makes the constraint become
\begin{equation}\label{ineq:convex_constraint}
\begin{bmatrix}
(\varepsilon_- + 1)I_{n-1} & JT^\top E^\top X E 			\\
E^\top X E T J^\top & H_\delta^{-1}
\end{bmatrix} \succeq 0.
\end{equation}
As for~\eqref{eq:BMI-epplus}, consider the approximation $LH_\Delta L
\approx \left(\dfrac{\sqrt{H_\Delta }L + L\sqrt{H_\Delta
}}{2}\right)^2$. This approximation can be thought of as
a rough completion of squares, which lends itself well to our approach of convexifying~\eqref{eq:BMI-epplus}. One should not expect the approximation to be reliably ``better" or ``worse" than the BMI; rather, it is only intended to reflect the original constraint more than a simple linearization. To this end,
substitute this in $M_\Delta$ to get
\begin{equation*}
\begin{aligned}
&\dfrac{1}{4}J T^\top (\sqrt{H_\Delta }L + L\sqrt{H_\Delta})^2
T J^\top \succeq (1 - \varepsilon_+)I_{n-1} \\
&\dfrac{1}{2}J T^\top (\sqrt{H_\Delta }L + L\sqrt{H_\Delta}) T
J^\top
\succeq \sqrt{(1 - \varepsilon_+)}I_{n-1} \\
&\dfrac{1}{2}J T^\top
(\sqrt{H_\Delta }L + L\sqrt{H_\Delta }) T J^\top \hspace{-1mm}\succeq \\
&\hspace{35mm}(1\hspace{-.5mm} - \hspace{-.5mm}
\dfrac{\varepsilon_+}{2}\hspace{-.5mm}
+\hspace{-.5mm}\dfrac{\varepsilon_+^2}{8} +
O(\varepsilon_+^3))I_{n-1},
\end{aligned}
\end{equation*}
where the second line uses the property that $T J^\top J T^\top = I_n
- \ones[n]\ones[n]^\top / n$ is idempotent and that
\begin{equation*}
\begin{aligned}
\left(\dfrac{1}{2}J T^\top (\sqrt{H_\Delta}L + L\sqrt{H_\Delta}) T
J^\top\right)^2 &\succeq (1 - \varepsilon_+)I_{n-1} \\
	&\qquad\qquad\succeq 0	\\
\Leftrightarrow \dfrac{1}{2}J T^\top (\sqrt{H_\Delta}L +
L\sqrt{H_\Delta}) T J^\top &\succeq \sqrt{1-\varepsilon_+}I_{n-1}
\succeq 0,
\end{aligned}
\end{equation*}
see~\cite{JV-RB:00}. The third line expresses the right-hand
side as a Taylor expansion about $\varepsilon_+ = 0$. Neglecting the
higher order terms $O(\varepsilon_+^3)$ and applying
Lemma~\ref{lem:schur-comp} gives
\begin{equation}\label{ineq:nonconvex_constraint}
\begin{bmatrix}
\bullet & \dfrac{1}{\sqrt{8}}\varepsilon_+ I_{n-1} 			\\
\dfrac{1}{\sqrt{8}}\varepsilon_+ I_{n-1} & I_{n-1}
\end{bmatrix} \succeq 0,
\end{equation}
with ${\bullet = \dfrac{1}{2}J T^\top (\sqrt{H_\Delta}L +
	L\sqrt{H_\Delta}) T J^\top} - {(1 -
	\dfrac{1}{2}\varepsilon_+)I_{n-1}}$.

Returning to $\Pc 3$, note that the latter three constraints are
satisfied by $L = E^\top X E$. Then, the approximate reformulation of
$\Pc 3$ can be written as
\begin{equation*}		
\begin{aligned}
\Pc 4: \quad &\underset{\varepsilon_-, \varepsilon_+,
	X}{\text{min}}
&&\max(\varepsilon_-, \varepsilon_+) \\
& \text{s.t.}
&& \varepsilon_- \geq 0, \varepsilon_+ \geq 0,			\\
&&& X \succeq 0, \eqref{ineq:convex_constraint}, \eqref{ineq:nonconvex_constraint}.	\\
\end{aligned}
\end{equation*}
This is a convex problem in $X$ and solvable in polynomial time. To
improve the solution, we perform some post-scaling. Take $L^\star_0 =
E^\top X_0^\star E$, where $X_0^\star$ is the solution to $\Pc 4$, and
let $M_{\Delta 0}^\star = JT^\top L^\star_0H_\Delta L^\star_0TJ^\top,
M_{\delta 0}^\star = JT^\top L^\star_0H_\delta
L^\star_0TJ^\top$. Then, consider
\begin{equation*}
\beta = \sqrt{\dfrac{2}{\mu_1(M_{\delta 0}^\star) + \ \mu_{n-1}(M_{\Delta 0}^\star)}},
\end{equation*}
and take $L^\star = \beta L^\star_0$. This shifts the eigenvalues of
$M^\star_0(x)$ to $M^\star(x)$ (defined similarly via $L^\star$) such
that $1 - \mu_1(M_\delta^\star) = -(1 - \mu_{n-1}(M_\Delta^\star))$,
which shrinks $\underset{i,x}{\max}(\vert 1 -
\mu_i(M^\star(x))\vert)$. We refer to this metric as
$\varepsilon_{L^\star} := \underset{i,x}{\max}(\vert 1 -
\mu_i(M^\star(x))\vert)$, and it can be verified that this
post-scaling satisfies
Assumption~\ref{ass:e-vals} with regard to $\varepsilon_{L^\star}$. To see this, first consider scaling $L$ by an arbitrarily small constant, which places the eigenvalues of $I_{n-1} - M(x)$ very close to $1$ and satisfies Assumption~\ref{ass:e-vals}. Then, consider gradually increasing this constant until the lower bound on the minimum eigenvalue and upper bound on the maximum eigenvalue of $I_{n-1} - M(x)$ are equal in magnitude. This is precisely the scaling produced by $\beta$. Then, the solution to $\Pc 4$ followed by
a post scaling by $\beta$ given by $L^\star$ is an approximation of
the solution to the nonconvex problem $\Pc 3$ with the sparsity
structure preserved.

	\begin{rem}\longthmtitle{Unknown Local Hessian Bounds}\label{rem:glob-hess-bound}
		It may be the case that a central entity tasked with computing some $L^\star$ does not have access to the local bounds $\delta_i, \Delta_i, \forall i$. In this case, globally known bounds $\delta \leq \delta_i, \Delta_i \leq \Delta, \forall i$ can be substituted in place of the local values in the formulation of $\Pc 4$. It can be verified that this will result in a more conservative formulation, and that the resulting $L^\star$ will still satisfy Assumption~\ref{ass:e-vals} at the expense of possibly larger $\varepsilon$.
	\end{rem}

\subsection{A Bound on Performance}
We are motivated to find a
``best-case scenario'' 
for our solution given the structural constraints of the
network. Instead of solving
$\Pc 3$ for $L$, we solve it for some $A$ where $A_{ij} = 0$ for
$j\notin \nodes_i^2$, i.e. the two-hop neighbor structure of the
network and sparsity structure of $LH(x)L$. Define $M_A := JT^\top A T
J^\top$. This problem is:
\begin{equation*}\label{eq:lower bound}
\begin{aligned}
\Pc 5: \quad &\underset{\varepsilon, A}{\text{min}}
&&\varepsilon \\
& \text{s.t.}
&&- \varepsilon I_{n-1} \preceq I_{n-1} - M_A \preceq \varepsilon I_{n-1}, \\
&&& A \ones[n] = \zeros[n], \ A \succeq 0, \\
&&& A_{ij} = 0, j \notin \nodes^2_i.
\end{aligned}
\end{equation*}
This problem is convex in $A$ and produces a solution $\varepsilon_A$,
which serves as a lower bound for the solution to $\Pc 3$. 
It should not be expected that this lower bound is tight or achievable by ``reverse engineering" an $L^\star$ with the desired sparsity from the solution $A^\star$ to $\Pc 5$, rather, $\varepsilon_{L^\star} - \varepsilon_A$ gives just a rough indication of how close
$\varepsilon_{L^\star}$ is to the conservative lower bound of
$\Pc 3$.
\section{Discrete Time Algorithm for Relaxed Economic Dispatch}
\label{sec:discrete-red}
In this section, we focus on a \emph{relaxed}
version of $\Pc 2$ to develop a direct relation between traditional
discrete-time Newton descent and our distributed, approximate
method. First, we state the relaxed problem and define the approximate
Newton step. We then state the $\distnewtondisc$ algorithm 
and provide a rigorous study of its convergence properties.
\subsection{Characterization of the Approximate Newton
	Step}\label{ssec:char_approx_newton}
Even the traditional centralized Newton method is not well-suited to
solve $\Pc 1$ due to the box constraints~\eqref{eq:p1-box-const}. For
this reason, for now we focus on the relaxed problem
\begin{subequations} \label{eq:p6-opt}
	\begin{align}
	\Pc 6: \
	& \underset{x}{\text{min}}
	&& \hspace{-12mm} f(x) = \sum_{i=1}^n f_i(x_i), \label{eq:p6-cost}\\
	& \text{subject to}
	&& \hspace{-12mm} \sum_{i=1}^n x_i = d. \label{eq:p6-resource-const}
	\end{align}
\end{subequations}
The equivalent unconstrained problem in $z$ is
\begin{equation} \label{eq:p7-opt} \Pc 7: \
\underset{z}{\text{min}} \quad g(z) := f(x^0 + Lz) =
\sum_{i=1}^n f_i(x^0_i + L_i z).
\end{equation}

\begin{rem}\longthmtitle{Nonuniqueness of Solution}
	Given a $z^\star$ which
	solves $\Pc 7$, the set of solutions can be characterized by
	$\setdef{z^{\star\prime}}{z^{\star\prime} = z^\star + \gamma
		\ones[n]\, \ \gamma\in\real}$. The fact that $z^{\star\prime}$ is
	a solution is due to $\nulo{(L)} = \spn{(\ones[n])}$, and the fact
	that this characterizes the entire set of solutions is due to
	$\nulo{(\nabla_{zz}g(z))} = \spn{(\ones[n])}$.
\end{rem}
To solve $\Pc 6,$ we aim to implement a descent method in $x$ via the
dynamics
\begin{equation} \label{eq:short algorithm} x^{+} = x +
\alpha L \subscr{\tilde{z}}{nt},
\end{equation}
where $\subscr{\tilde{z}}{nt}$ is the \emph{approximate} Newton step that we seek to compute distributively, and $\alpha > 0$ is a
fixed step size.

It is true that $\Pc 7$ is unconstrained with respect to $z$, although
we have already alluded to the fact that the Hessian matrix
$\nabla_{zz} g(z) = LH(x+Lz)L$ is rank-deficient stemming
from~\eqref{eq:p6-resource-const}. We now reconcile this by deriving a
well defined Newton step in a reduced variable
$\hat{z}\in\real^{n-1}$.  Consider a change of coordinates by the
orthogonal matrix $T$ defined in Section~\ref{sec:prob-statement} and
write $z=TJ^\top\hat{z}$. Taking the gradient and Hessian of $g(z)$
with respect to $\hat{z}$ gives
\begin{equation*}
\begin{aligned}
\nabla_{\hat{z}} g(z) &=
JT^\top \nabla_z g(z) = JT^\top L \nabla_x f(x+LTJ^\top \hat{z}) \\
\nabla_{\hat{z}\hat{z}} g(z) &= JT^\top LH(x + LTJ^\top \hat{z})LTJ^\top \\
&= M(x + LTJ^\top \hat{z}).
\end{aligned}
\end{equation*}
Notice that the zero eigenvalue of $\nabla_{zz} g(z)$ is eliminated by
this projection and the other eigenvalues are preserved. Evaluating at
$x + LTJ^\top \hat{z}\big|_{\hat{z}=0}$, the Newton step in $\hat{z}$
is now well defined as $\subscr{\hat{z}}{nt}
:=-\nabla_{\hat{z}\hat{z}} g(0)^{-1} \nabla_{\hat{z}} g(0) =
-M(x)^{-1}JT^\top L \nabla_x f(x)$.

Consider now a $q$-approximation of $M(x)^{-1}$ given by
$\sum_{p=0}^q (I_{n-1} - M(x))^p$ and return to the original
coordinates to obtain the \emph{approximate} Newton direction
$L\tilde{z}_{nt}$:
\begin{equation*}
L\tilde{z}_{nt} = -LTJ^\top \sum_{p=0}^{q} (I_{n-1} - M(x))^p JT^\top L \nabla_x f(x).
\end{equation*}
With the property that $LTJ^\top JT^\top L = L^2$, rewrite
$L\subscr{\tilde{z}}{nt}$:
\begin{equation}
L\subscr{\tilde{z}}{nt} = -L \sum_{p=0}^q (I_n - LH(x)L)^p L \nabla_x f(x).
\label{eq:znewton-step}
\end{equation}
It can be seen via eigendecomposition of $I_n - LHL$, which is normal, and application
of Assumption~\ref{ass:e-vals} that the terms $L(I_n - LHL)^p$ become
small with $p\rightarrow\infty$ at a rate dictated by
$\varepsilon$. Note that there is a nonconverging mode
  of the sum corresponding to the eigenspace spanned by $\ones[n]$,
  but this is mapped to zero by left multiplication by $L$.
This expression can be computed distributively: each
multiplication by $L$ encodes a communication with the neighbor set of
each agent, and we utilize recursion to perform the computation
efficiently, which is formally described in
Algorithm~\ref{alg:approx-newton}.
\subsection{The $\distnewton$ Algorithm}
We now have the tools to introduce the \\
$\distnewtondisc$ algorithm, or
$\danad$.

\begin{algorithm}
	\caption{$\danad_i$}
	\label{alg:approx-newton}
		\begin{algorithmic}[1]
		\Require $L_{ij}$ for $j \in \{i\} \cup \nodes_i$ and
		communication with nodes $j \in \nodes_i \cup \nodes_i^2$
		\Procedure{Newton$_i$}{$x_i^0,L_i,f_i,q$}
		\State Initialize $x_i\gets x_i^0$
		\Loop 
		\State Compute 
		$\dfrac{\partial f_i}{\partial x_i}$, $\dfrac{\partial^2
			f_i}{\partial x_i^2}$; send to $j\in\nodes_i$, $\nodes^2_i$ \vspace{1mm} \label{alg1:loop first term}
		\State $y_i \gets L_{ii}\dfrac{\partial f_i}{\partial x_i} +
		\sum_{j \in\nodes_i} L_{ij} \dfrac{\partial f_j}{\partial x_j}$
		\State $z_i \gets -y_i$
		\State $p_i \gets 1$
		\While{$p_i \leq q$}		\label{alg1:loop pth term}
		\State 	Acquire $y_j$ from $j \in \nodes_i^{2}$\label{alg1:y-info}
		\State 	$w_i = (I_n - LH(x)L)_i y$\label{alg1:w-comp}
		\State 	$y_i \gets w_i$
		\State 	$z_i \gets z_i - y_i$	\label{alg1:sum z}
		\State $p_i \gets p_i + 1$
		\EndWhile				
		\State Acquire $z_j$ for $j \in \nodes_i$ \label{alg1:loop step}
		\State $x_i \gets x_i + \alpha\left( L_{ii}z_i + \sum_{j \in\nodes_i}L_{ij}z_j\right)$
		\EndLoop
		\State \textbf{return} $x_i$
		\EndProcedure
	\end{algorithmic}
\end{algorithm}

The algorithm is constructed directly from~\eqref{eq:short algorithm}
and~\eqref{eq:znewton-step}. The $L\nabla_xf(x^k)$ factor
of~\eqref{eq:znewton-step} is computed first in the loop starting on
line~\ref{alg1:loop first term}. Then, each additional term of the sum
is computed recursively in the loop starting on line~\ref{alg1:loop
  pth term}, where $y$ implicitly embeds the
  exponentiation by $p$ indicated in~\eqref{eq:znewton-step}, $z$
  accumulates each term of the summation of~\eqref{eq:znewton-step}, $w$ is used as an intermediate variable, and $p_i$ is used as a simple counter. We
  introduce some abuse of notation by switching to vector and matrix
  representations of local variables in line~\ref{alg1:w-comp}; this
  is done for compactness and to avoid undue clutter. Note that the
  diagonal elements of $H(x)$ are given by $\partial^2 f_j / \partial
  x_j^2$ and the matrix and vector operations can be implemented
  locally for each agent using the corresponding elements $y_j$,
  $L_{ij}$, and $L^2_{ij}$. The one-hop and two-hop communications of
  the algorithm are contained in lines~\ref{alg1:loop first term}
  and~\ref{alg1:y-info}, where line~\ref{alg1:loop first term} calls
  upon local evaluations of the gradient and Hessian. (In principle,
  Hessian information could be acquired along with $y_j$ in the first
  iteration of the inner loop to utilize one fewer two-hop
  communication, but it need only be acquired once per outer loop.)
  The information is utilized in local computations indicated the next
  line in each case. It is understood that agents perform
  communications and computations synchronously.

The outer loop of
the algorithm corresponding to~\eqref{eq:short
    algorithm} is performed starting on line~\ref{alg1:loop step}. If
only one-hop communications are available, each outer loop of the
algorithm requires $2q+1$ communications. The process repeats until
desired accuracy is achieved. If $q$ is increased, it requires
additional communications, but the step approximation gains accuracy.
\subsection{Convergence Analysis}~\label{ssec:disc-conv-anl}
This section establishes convergence properties of the $\danad$
algorithm for problems of the form $\Pc 6$. For the sake of cleaner
analysis, we will reframe the algorithm as solving $\Pc 7$ via
\begin{equation} \label{eq:short algorithm-z} z^{+} = z -
\alpha A_q(z) \nabla_z g(z),
\end{equation}
where $A_q(z):= \sum_{p=0}^q (I_n - LH(x^0+Lz)L)^p$. Then, note that
the solution $z^\star$ to $\Pc 7$ solves $\Pc 6$ by $x^\star = x^0 +
Lz^\star$ and that~\eqref{eq:short algorithm-z} is equivalent
to~\eqref{eq:short algorithm}-\eqref{eq:znewton-step} and Algorithm~\ref{alg:approx-newton}.
\begin{rem}\longthmtitle{Initial Condition, 
		Trajectories, \& Solution}\label{rem:init-traj-sol}
	Consider an initial condition $z(0)\in\real^n$ with $\ones[n]^\top
	z(0) = \omega$. Due to $A_q(z)\nabla_z g(z)\perp\ones[n]$, the trajectories under~\eqref{eq:short algorithm-z} are contained
	in the set $\setdef{z}{z = \tilde{z} + (\omega/n) \ones[n], \
		\tilde{z} \perp \ones[n]}$. The solution $x^\star = x^0 +
	Lz^\star$ to $\Pc 6$ is agnostic to $(\omega/n) \ones[n]$ due to
	$\nulo{(L)} = \spn{(\ones[n])}$, so we consider the solution $z^\star$ uniquely satisfying $\ones[n]^\top z^\star = \omega$.
\end{rem}
\begin{thrm}\longthmtitle{Convergence of  $\danad$}\label{thm:cvg-dana-d}
	{\rm Given an initial condition $z(0)\in\real^n$, if
		Assumption~\ref{ass:conn-graph}, on the bidirectional connected
		graph, Assumption~\ref{ass:initial}, on the feasibility of the
		initial condition, Assumption~\ref{ass:cost}, on bounded Hessians,
		and Assumption~\ref{ass:e-vals}, on convergent eigenvalues, hold,
		then the $\danad$ dynamics~\eqref{eq:short algorithm-z} converge
		asymptotically to an optimal solution $z^\star$ of $\Pc 7$
		uniquely satisfying $\ones[n]^\top z^\star = \ones[n]^\top z(0)$
		for any $q \in \natural$ and $\alpha <
		\dfrac{2(1-\epsilon)}{(n-1)(1+\epsilon)(1-\epsilon^{q+1})}$.  }
\end{thrm}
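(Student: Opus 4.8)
The plan is to use the optimality gap $V(z) := g(z) - g(z^\star)$ as a Lyapunov function and to show it strictly decreases along~\eqref{eq:short algorithm-z} for $\alpha$ in the stated range. First I would set up the reduction: since $g(z) = f(x^0 + Lz)$ is the composition of the strongly convex $f$ (Assumption~\ref{ass:cost}) with the affine map $z \mapsto x^0 + Lz$, it is convex, with $\nabla_z g(z) = L\nabla_x f(x^0+Lz)$ and $\nabla_{zz} g(z) = LH(x^0+Lz)L$. Because $L\ones[n] = \zeros[n]$ and $L=L^\top$, every gradient satisfies $\nabla_z g(z)\perp\ones[n]$; combined with Remark~\ref{rem:init-traj-sol}, the iterates stay in the affine slice $\{z : \ones[n]^\top z = \ones[n]^\top z(0)\}$, on which $g$ has the unique minimizer $z^\star$. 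Hence it suffices to analyze the dynamics restricted to the subspace $\ones[n]^\perp$.

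The second step is the spectral analysis of $A_q(z)$. By Assumption~\ref{ass:e-vals} the $n-1$ nonzero eigenvalues of $LH(x^0+Lz)L$ lie in $[1-\epsilon,1+\epsilon]$ for every $z$, while $\ones[n]$ is its null vector. Since $A_q(z) = \sum_{p=0}^q (I_n - LHL)^p$ is a polynomial in $LHL$, it commutes with it and shares its eigenvectors; on $\ones[n]^\perp$ its eigenvalues are $s(\lambda) = \sum_{p=0}^q \lambda^p$ with $\lambda = 1-\mu\in[-\epsilon,\epsilon]$. I would record three facts that hold uniformly in $z$: (i) $s(\lambda) = (1-\lambda^{q+1})/(1-\lambda) > 0$, so $A_q$ is symmetric positive definite on $\ones[n]^\perp$ and $A_q\nabla_z g \perp \ones[n]$; (ii) $s(\lambda)\le s_{\max} := (1-\epsilon^{q+1})/(1-\epsilon)$; and (iii) the product $A_q LHL$ has eigenvalues $1-\lambda^{q+1}\in[1-\epsilon^{q+1},1+\epsilon^{q+1}]$, the quantitative statement that the $q$-approximation is close to the true Newton preconditioner.

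The core estimate is a descent inequality. Writing $d = A_q(z)\nabla_z g(z)$ and using the second-order Taylor expansion of $g$ with the uniform Hessian bound, I would obtain
\begin{equation*}
g(z^+) \le g(z) - \alpha\, \nabla_z g^\top A_q \nabla_z g + \tfrac{1}{2}\alpha^2 B\, \|A_q\nabla_z g\|^2,
\end{equation*}
where $B$ upper bounds the largest eigenvalue of $\nabla_{zz}g$ on $\ones[n]^\perp$. The sharp inequality $\|A_q\nabla_z g\|^2 \le s_{\max}\,\nabla_z g^\top A_q \nabla_z g$ (valid since $d\in\ones[n]^\perp$) then gives $g(z^+) - g(z) \le -\alpha(1 - \tfrac{1}{2}\alpha B s_{\max})\,\nabla_z g^\top A_q\nabla_z g$, which is strictly negative whenever $\nabla_z g\neq 0$ and $\alpha < 2/(B s_{\max})$. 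Taking $B = (n-1)(1+\epsilon)$, obtained by bounding $\mu_{\max}(LHL)$ by the trace of $LHL$ on $\ones[n]^\perp$ (a sum of $n-1$ eigenvalues each at most $1+\epsilon$), and substituting $s_{\max}$ reproduces exactly the stated threshold $\alpha < 2(1-\epsilon)/((n-1)(1+\epsilon)(1-\epsilon^{q+1}))$.

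Finally I would close the argument: $V(z^k)$ is nonincreasing and bounded below by $0$, so it converges, the iterates remain in a compact sublevel set, and summing the descent inequality forces $\nabla_z g(z^k)\to 0$. Since $\nabla_{zz}g \succeq (1-\epsilon)I$ on $\ones[n]^\perp$, $g$ is strongly convex on the slice, so a Polyak--{\L}ojasiewicz inequality upgrades this to convergence of $z^k$ to the unique minimizer $z^\star$ satisfying $\ones[n]^\top z^\star = \ones[n]^\top z(0)$. The hard part is the constant bookkeeping in the third step: one must verify the positive definiteness and the uniform (in $z$) spectral bounds on $A_q(z)$ despite its dependence on the moving iterate, and assemble the Hessian and preconditioner bounds so that the dimension factor $(n-1)$ and the geometric factor $(1-\epsilon^{q+1})/(1-\epsilon)$ appear with exactly the right exponents; the variable-metric nature of the method is what makes this delicate, though it is tamed by the fact that Assumption~\ref{ass:e-vals} holds for all $x$ and hence yields step-independent constants.
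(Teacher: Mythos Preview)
Your proposal is correct and follows essentially the same approach as the paper: the same Lyapunov function $V(z)=g(z)-g(z^\star)$, the same quadratic descent inequality from the Hessian bound, and the same spectral diagonalization of $A_q(z)$ on $\ones[n]^\perp$ to extract the step-size threshold. The only cosmetic difference is where the $(n-1)$ factor enters: you obtain it by bounding $\mu_{\max}(LHL)$ by its trace on $\ones[n]^\perp$ (so $B=(n-1)(1+\epsilon)$), whereas the paper keeps the sharp Hessian bound $1+\epsilon$ and instead bounds $\|\widetilde{A}_q^{1/2}\|_2^2$ by the trace $\sum_{i=1}^{n-1}(1-\eta_i^{q+1})/(1-\eta_i)\le(n-1)(1-\epsilon^{q+1})/(1-\epsilon)$; both routes are conservative in the same way and deliver the identical threshold.
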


\begin{proof} Consider the discrete-time Lyapunov function
	\begin{equation*}
	V(z) = g(z) - g(z^\star)
	\end{equation*}
	defined on the domain $\dom{(V)} = \setdef{z}{\ones[n]^\top z =
		\ones[n]^\top z(0)}$. From the theorem statement and in
	consideration of Remark~\ref{rem:init-traj-sol}, the trajectories of
	$z$ under~\eqref{eq:short algorithm-z} are contained in the domain
	of $V$, and $V(z) > 0, \forall z\in\dom{(V)}, z\neq z^\star$.  To
	prove convergence to $z^\star$, we must show negativity of
	\begin{equation}\label{eq:disc-lyap-diff}
	V(z^+) - V(z) = g(z^+) - g(z).
	\end{equation}
	From the weight design of $L$ (Assumption~\ref{ass:e-vals}),
	we have $\nabla_{zz}g(z) \preceq (1+\epsilon)I_n$,
	$\epsilon\in[0,1)$. This implies
	\begin{equation*}
	\begin{aligned}
	g(z^+) &= g(z) + \nabla_z g(z)^\top (z^+ - z) \\
	& \qquad + \dfrac{1}{2}(z^+ - z)^\top \nabla_{zz}g(z^\prime)(z^+ - z) \\
	&\leq g(z) + \nabla_z g(z)^\top (z^+ - z) + 
	\dfrac{1+\epsilon}{2}\| z^+ - z\|^2_2,
	\end{aligned}
	\end{equation*}
	which employs the standard quadratic expansion of convex
	functions via some $z^\prime$ in the segment
	extending from $z$ to $z^+$ (see e.g. $\S 9.1.2$ of~\cite{SB-LV:04}).
	Substituting~\eqref{eq:short algorithm-z} gives
	\begin{equation}~\label{eq:gz-ineq}
	\begin{aligned}
	g(z^+) &\leq g(z) - \alpha \nabla_z g(z)^\top A_q(z) \nabla_z g(z) \\
	& \qquad+ \dfrac{(1+\epsilon)\alpha^2}{2}\| A_q(z) \nabla_z g(z)\|^2_2.
	\end{aligned}
	\end{equation}
	We now show $A_q(z) \succ 0$ by computing its eigenvalues. Note
	$\mu_i(I_n -
	LH(x^0+Lz)L)\in[-\varepsilon,\varepsilon]\cup\{1\}$. Let $\mu_i(I_n
	- LH(x^0+Lz)L) = \eta_i(z)$ for $i\in\until{n-1}$. The terms of
	$A_q(z)$ commute and it is normal, so it can be diagonalized as
	\begin{equation*}
	\begin{aligned}
	A_q(z) = W(z) \begin{bmatrix}
	\ddots & & & \\
	& \dfrac{1 - \eta_i(z)^{q+1}}{1 - \eta_i(z)} & & \\
	& & \ddots \\
	& & & q+1
	\end{bmatrix} W(z)^\top,
	\end{aligned}
	\end{equation*}
	where the columns of $W(z)$ are the eigenvectors of $A_q(z) \succ
	0$, the last column being $\ones[n]$, and the terms of the diagonal
	matrix are its eigenvalues computed by a geometric series.
	
	For now, we only use the fact that $A_q(z)\succ 0$ to justify the
	existence of $A_q(z)^{1/2}$. Returning to~\eqref{eq:gz-ineq},
	\begin{equation}\label{eq:alpha-bound-0}
	\begin{aligned}
	g(z^+) &\leq g(z) - \alpha \biggl(\| A_q(z)^{1/2} \nabla_z g(z)\|^2_2 \\
	& \qquad - \dfrac{(1+\epsilon)\alpha}{2}\| A_q(z) \nabla_z g(z)\|^2_2\biggr).
	\end{aligned}
	\end{equation}
	Recall $\nabla_z g(z)\perp\ones[n]$ and that $\ones[n]$ is an
	eigenvector of $A_q(z)$ associated with the eigenvalue
	$q+1$. Consider a matrix $\widetilde{A}_q(z)$ whose rows are
	projected onto the subspace spanning the orthogonal complement of
	$\ones[n]$. More precisely, writing $\widetilde{A}_q(z)$ via its
	diagonalization gives
	\begin{subequations}\label{eq:tilde-Aq}
		\begin{align}
		&\widetilde{A}_q(z) = W(z) \begin{bmatrix}
		\ddots & & & \\
		& \dfrac{1 - \eta_i(z)^{q+1}}{1 - \eta_i(z)} & & \\
		& & \ddots \\
		& & & 0
		\end{bmatrix} W(z)^\top,\label{eq:tilde-Aq-def}	\\
		&\widetilde{A}_q(z) \nabla_z g(z) = A_q(z) \nabla_z g(z),
		\label{eq:tilde-Aq-lin} \\
		&\widetilde{A}_q(z)^{1/2} \nabla_z g(z) = A_q(z)^{1/2} \nabla_z
		g(z).
		\label{eq:tilde-Aq-sqrt}
		\end{align}
	\end{subequations}
	Combining~\eqref{eq:alpha-bound-0}--\eqref{eq:tilde-Aq} gives the
	sufficient condition on $\alpha$:
	\begin{equation}\label{eq:alpha-bound-2}
	\alpha < \dfrac{2\| \widetilde{A}_q(z)^{1/2} 
		\nabla_z g(z)\|^2_2}{(1+\epsilon)\| \widetilde{A}_q(z) 
		\nabla_z g(z)\|^2_2}.
	\end{equation}
	Multiply the top and bottom of the righthand side
	of~\eqref{eq:alpha-bound-2} by $\| \widetilde{A}_q(z)^{1/2}\|
	^2_2$ and apply submultiplicativity of $\| \cdot \|^2_2$:
	\begin{equation}\label{eq:Aq-ineq}
	\dfrac{2}{(1+\epsilon)\| \widetilde{A}_q(z)^{1/2}\|^2_2} 
	\leq \dfrac{2\| \widetilde{A}_q(z)^{1/2} \nabla_z g(z)
		\|^2_2}{(1+\epsilon)\| \widetilde{A}_q(z) \nabla_z g(z)\|^2_2}.
	\end{equation}
	Finally, we bound the lefthand side of~\eqref{eq:Aq-ineq} from below
	by substituting $\eta_i(z)$ with $\epsilon$:
	\begin{equation}\label{eq:bound-Aq-half}
	\begin{aligned}
	\| \widetilde{A}_q(z)^{1/2}\|^2_2
	&= \sum_i^{n-1} \dfrac{1-\eta_i(z)^{q+1}}{1-\eta_i(z)} \\
	&\leq (n-1)\dfrac{1-\epsilon^{q+1}}{1-\epsilon}, \quad \forall
	z\in\real^n.
	\end{aligned}
	\end{equation}
	Combining~\eqref{eq:bound-Aq-half} with~\eqref{eq:Aq-ineq} gives the
	condition on $\alpha$ in the theorem statement and completes the
	proof.		
\end{proof}
In practice, we find this to be a very conservative bound on $\alpha$
due to the employment of many inequalities which simplify the
analysis. We note that designing $L$ effectively such that $\epsilon$
is close to zero allows for more flexibility in choosing $\alpha$
large, which intuitively indicates the Taylor approximation of the
Hessian inverse converging with greater accuracy in fewer terms $q$.
\begin{thrm}\longthmtitle{Linear Convergence of  $\danad$}\label{thm:cvg-dana-d-exp}
	{\rm Given an initial condition $z(0)\in\real^n$ and step size
		$\alpha =
		\dfrac{(1-\epsilon)}{(n-1)(1+\epsilon)(1-\epsilon^{q+1})}$, if
		Assumption~\ref{ass:conn-graph}, on the bidirectional connected
		graph, Assumption~\ref{ass:initial}, on the feasibility of the
		initial condition, Assumption~\ref{ass:cost}, on bounded Hessians,
		and Assumption~\ref{ass:e-vals}, on convergent eigenvalues, hold,
		the $\danad$ dynamics~\eqref{eq:short algorithm-z} converge
		\emph{linearly} to an optimal solution $z^\star$ of $\Pc 7$
		uniquely satisfying $\ones[n]^\top z^\star = \ones[n]^\top z(0)$
		in the sense that $g(z^+) - g(z) \leq
		-\dfrac{(1-\varepsilon)^4(1+\varepsilon(-\varepsilon)^q)^2\|
			z-z^\star\|_2^2}{2(n-1)^2(1+\varepsilon)^3(1-\varepsilon^{2(q+1)})}$
		for any $q \in \natural$.  }
\end{thrm}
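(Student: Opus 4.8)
The plan is to upgrade the one-step decrease bound already obtained in the proof of Theorem~\ref{thm:cvg-dana-d} from "strictly negative" to "proportional to $\|z-z^\star\|_2^2$," rather than re-derive everything. I would start from the inequality~\eqref{eq:gz-ineq},
\[
g(z^+) \le g(z) - \alpha\,\nabla_z g(z)^\top A_q(z)\,\nabla_z g(z) + \tfrac{(1+\epsilon)\alpha^2}{2}\,\|A_q(z)\nabla_z g(z)\|_2^2 ,
\]
and substitute the prescribed step size $\alpha = \tfrac{(1-\epsilon)}{(n-1)(1+\epsilon)(1-\epsilon^{q+1})}$, which is exactly half of the admissible upper endpoint in Theorem~\ref{thm:cvg-dana-d}. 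Because this $\alpha$ satisfies the hypothesis of that theorem, all of its structural facts carry over verbatim: $A_q(z)\succ0$ on $\ones[n]^\perp$, the diagonalization and the identities $A_q(z)\nabla_z g(z)=\widetilde A_q(z)\nabla_z g(z)$ and $A_q(z)^{1/2}\nabla_z g(z)=\widetilde A_q(z)^{1/2}\nabla_z g(z)$ of~\eqref{eq:tilde-Aq}, and the strict decrease.

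Next I would make the eigenvalue bounds quantitative. On $\ones[n]^\perp$ the eigenvalues of $\widetilde A_q(z)$ are $\tfrac{1-\eta_i^{q+1}}{1-\eta_i}=\sum_{p=0}^{q}\eta_i^p$ with $\eta_i\in[-\epsilon,\epsilon]$, so they lie between $\lambda_{\min}=\tfrac{1+\varepsilon(-\varepsilon)^q}{1+\epsilon}$ (the value at $\eta=-\epsilon$, using $\varepsilon(-\varepsilon)^q=(-1)^q\epsilon^{q+1}$) and $\lambda_{\max}=\tfrac{1-\epsilon^{q+1}}{1-\epsilon}$ (the value at $\eta=\epsilon$). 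Writing $\nabla_z g(z)$ in the eigenbasis of $\widetilde A_q(z)$ and using $\nabla_z g(z)\perp\ones[n]$, I would lower bound $\nabla_z g(z)^\top\widetilde A_q(z)\nabla_z g(z)\ge\lambda_{\min}\|\nabla_z g(z)\|_2^2$ and upper bound $\|\widetilde A_q(z)\nabla_z g(z)\|_2^2\le\lambda_{\max}^2\|\nabla_z g(z)\|_2^2$. Inserting these together with the chosen $\alpha$ into the displayed inequality collapses the right-hand side to $-c_1\|\nabla_z g(z)\|_2^2$ for an explicit constant $c_1>0$ assembled from $\lambda_{\min}$, $\lambda_{\max}$, $\alpha$ and $n$.

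The genuinely new ingredient beyond Theorem~\ref{thm:cvg-dana-d} is converting $\|\nabla_z g(z)\|_2^2$ into $\|z-z^\star\|_2^2$. By Assumption~\ref{ass:e-vals} the nonzero eigenvalues of $\nabla_{zz}g(z)=LH(x^0+Lz)L$ coincide with those of $M(\cdot)$ and hence lie in $[1-\epsilon,1+\epsilon]$, so $g$ is $(1-\epsilon)$-strongly convex along $\ones[n]^\perp$. Since the trajectory and the distinguished minimizer $z^\star$ share the same $\ones[n]$-component (Remark~\ref{rem:init-traj-sol}) and $\nabla_z g(z^\star)=0$, strong convexity with Cauchy--Schwarz gives $\|\nabla_z g(z)\|_2\ge(1-\epsilon)\|z-z^\star\|_2$, hence $\|\nabla_z g(z)\|_2^2\ge(1-\epsilon)^2\|z-z^\star\|_2^2$. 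Substituting this (licit because $c_1>0$) yields $g(z^+)-g(z)\le -c_1(1-\epsilon)^2\|z-z^\star\|_2^2$, and simplifying $c_1(1-\epsilon)^2$ — in particular factoring $1-\epsilon^{2(q+1)}=(1-\epsilon^{q+1})(1+\epsilon^{q+1})$ to reconcile denominators — should reproduce the stated constant for every $q\in\natural$.

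I expect the main obstacle to be the eigenvalue extremization together with the subsequent bookkeeping. One must verify that the endpoint $\eta=-\epsilon$ controls the smallest relevant eigenvalue: the parity of $q$ flips the sign of $(-\epsilon)^{q+1}$, and for even $q$ the map $\eta\mapsto\sum_{p}\eta^p$ can attain an interior minimum on $[-\epsilon,\epsilon]$, a point that must be reconciled with (or absorbed by the slack in) the claimed bound. After that, the delicate part is carrying the powers of $(1\pm\epsilon)$, of $(n-1)$, and the factor $1+\varepsilon(-\varepsilon)^q$ through the algebra without loss so that the estimate collapses \emph{exactly} to the closed form in the statement rather than to a merely proportional bound. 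By contrast, the strong-convexity conversion and the restriction of the analysis to the invariant affine subspace $\{z:\ones[n]^\top z=\ones[n]^\top z(0)\}$ — which is what makes $\|z-z^\star\|_2$ well defined — are conceptually routine but should be invoked explicitly.
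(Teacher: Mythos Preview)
Your overall architecture matches the paper: start from~\eqref{eq:gz-ineq}, use the eigenvalue structure of $\widetilde A_q(z)$ on $\ones[n]^\perp$ to reduce to a multiple of $\|\nabla_z g(z)\|_2^2$, then invoke strong convexity on $\ones[n]^\perp$ (the paper does this via a line-integral form of the Fundamental Theorem of Calculus) to obtain $\|\nabla_z g(z)\|_2^2\ge(1-\varepsilon)^2\|z-z^\star\|_2^2$. The lower bound $\lambda_{\min}=\tfrac{1+\varepsilon(-\varepsilon)^q}{1+\varepsilon}$ is exactly what the paper uses.

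The one substantive divergence is in how you handle the two terms of~\eqref{eq:gz-ineq}. You bound the linear term below by $\lambda_{\min}$ and the quadratic term above by $\lambda_{\max}^2$; the resulting constant is $\alpha\lambda_{\min}-\tfrac{(1+\varepsilon)\alpha^2}{2}\lambda_{\max}^2$, which does \emph{not} simplify to the closed form in the statement (in particular the two pieces carry different powers of $(n-1)$). The paper instead exploits that the prescribed $\alpha$ is exactly half of the threshold $\bar\alpha$ from Theorem~\ref{thm:cvg-dana-d}: writing $c_1(z)=\|\widetilde A_q^{1/2}\nabla_z g\|_2^2$ and $c_2(z)=\tfrac{1+\varepsilon}{2}\|\widetilde A_q\nabla_z g\|_2^2$, the threshold condition $-\bar\alpha c_1+\bar\alpha^2 c_2\le 0$ with $\bar\alpha=2\alpha$ rearranges to $-\alpha c_1+\alpha^2 c_2\le -\alpha^2 c_2$. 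This flips the sign on the quadratic term, so one now only needs a \emph{lower} bound on $\|\widetilde A_q\nabla_z g\|_2^2$, namely $\lambda_{\min}^2\|\nabla_z g\|_2^2$; no $\lambda_{\max}$ appears, and the stated constant drops out directly after substituting $\alpha$. Your route proves linear convergence with an explicit (and arguably comparable) rate, but if the objective is to reproduce the exact expression in the theorem, you should replace your upper-bound-on-the-quadratic step with this half-step manipulation.
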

\begin{proof}
	Define
	\begin{equation*}
	\begin{aligned}
	c_1(z) &= \| \widetilde{A}_q(z)^{1/2}\nabla_z g(z)\|_2^2, \\
	c_2(z) &= \dfrac{(1+\varepsilon)}{2}\|
	\widetilde{A}_q(z)\nabla_z g(z)\|_2^2,
	\end{aligned}
	\end{equation*}
	with $\widetilde{A}_q(z)$ defined as
	in~\eqref{eq:tilde-Aq-def}.
	Recalling~\eqref{eq:alpha-bound-0}--\eqref{eq:tilde-Aq}, consider $\bar{\alpha} = 2\alpha$ as the smallest step size such
	that $- \bar{\alpha} c_1(z) + \bar{\alpha}^2c_2(z)$ is not strictly
	negative for all $z$, which is obtained from the result of Theorem~\ref{thm:cvg-dana-d}. 
	Then,
	\begin{equation}\label{eq:c-bounds-1}
	\begin{aligned}
	-\bar{\alpha} c_1(z) + \bar{\alpha}^2c_2(z) &\leq 0 \Rightarrow \\
	-\alpha c_1(z) + \alpha^2 c_2 (z) &\leq -\alpha^2 c_2(z).
	\end{aligned}
	\end{equation}
	The second line is obtained from the first by substituting
	$\bar{\alpha} = 2\alpha$. We now consider an
	implementation of $\danad$ with $\alpha$. From~\eqref{eq:alpha-bound-0}
	and substituting
	via~\eqref{eq:tilde-Aq-lin}--\eqref{eq:tilde-Aq-sqrt}, we obtain
	$g(z^+) - g(z) \leq -\alpha c_1(z) + \alpha^2 c_2 (z)$. Combining
	this with the second line of~\eqref{eq:c-bounds-1},
	\begin{equation}\label{eq:c-bounds-2}
	g(z^+) - g(z) \leq -\alpha^2 c_2 (z).
	\end{equation}
	We seek a lower bound for $\widetilde{A}_q(z)$. Consider its
	definition~\eqref{eq:tilde-Aq-def}, where a lower bound can be
	obtained by substituting each $\eta_i(z)$ by $-\varepsilon$. Then,
	\begin{equation*}
	\widetilde{A}_q(z) \succeq \dfrac{1+\varepsilon 
		(-\varepsilon)^{q}}{1 + \varepsilon} \left(I_n - 
	\dfrac{\ones[n]\ones[n]^\top}{n}\right).
	\end{equation*}
	Returning to~\eqref{eq:c-bounds-2} and applying the definition of $c_2(z)$,
	\begin{equation}\label{eq:c-bounds-3}
	g(z^+) - g(z) \leq -\dfrac{\alpha^2(1+
		\varepsilon(-\varepsilon)^q)^2}{2(1+\varepsilon)}\| \nabla_z g(z)\|_2^2,
	\end{equation}
	due to $\nulo(I_n - \ones[n]\ones[n]^\top/n) = \spn(\ones[n])$ and
	$\nabla_z g(z) \perp \ones[n]$.
	
	Next, we bound $\| \nabla_z g(z)\|_2^2$. Apply the Fundamental
	Theorem of Calculus to compute $\nabla_z g(z)$ via a line integral. Let $z(s) = sz + (1-s)z^\star$. Then,
	\begin{equation}\label{eq:c-bounds-4}
	\begin{aligned}
	\nabla_z g(z) &= \int_0^1 \nabla_{zz} g(z(s))
	(z-z^\star) ds .
	\end{aligned}
	\end{equation}
	Applying Assumption~\ref{ass:e-vals} (convergent eigenvalues) gives a
	lower bound on the Hessian of $g$, implying a lower bound on its
	line integral:
	\begin{equation}\label{eq:c-bounds-5}
	\begin{aligned}
	&\nabla_{zz}g(z)\succeq
	(1-\varepsilon)(I-\ones[n]\ones[n]^\top/n)
	\Rightarrow	\\
	\int_0^1 &\nabla_{zz} g(z(s))ds \succeq
	(1-\varepsilon)(I-\ones[n]\ones[n]^\top/n).
	\end{aligned}
	\end{equation}
	Factoring out $z-z^\star$ from~\eqref{eq:c-bounds-4} and applying
	the second line of~\eqref{eq:c-bounds-5} gives the lower bound
	\begin{equation}\label{eq:c-bounds-6}
	\begin{aligned}
	\| \nabla_z g(z)\|_2^2 \geq (1-\varepsilon)^2\|
	z-z^\star\|_2^2,
	\end{aligned}
	\end{equation}
	due to $\nulo(I_n - \ones[n]\ones[n]^\top/n) = \spn(\ones[n])$ and
	$z-z^\star \perp \ones[n]$. Combining~\eqref{eq:c-bounds-6}
	with~\eqref{eq:c-bounds-3} and substituting $\alpha$:
	\begin{equation*}
	g(z^+) - g(z) \leq -\dfrac{(1-\varepsilon)^4(1+
		\varepsilon(-\varepsilon)^q)^2\|
		z-z^\star\|_2^2}{2(n-1)^2(1+
		\varepsilon)^3(1-\varepsilon^{2(q+1)})}.
	\end{equation*}
\end{proof}
In principle, this result can be extended to any $\alpha$ which is
compliant with Theorem~\ref{thm:cvg-dana-d}; we have chosen this
particular $\alpha$ for simplicity. The methods we employ to arrive at
the results of Theorems~\ref{thm:cvg-dana-d}
and~\ref{thm:cvg-dana-d-exp} are necessarily conservative. However, in
practice, we find that choosing substantially larger $\alpha$
generally converges to the solution faster. Additionally, we find
clear-cut improved convergence properties for larger $q$ (more
accurate step approximation) and smaller $\varepsilon$ (more effective
weight design). Simulations confirm this in Section~\ref{sec:sims-discuss}. 
\section{Continuous Time Distributed Approximate Newton
	Algorithm}\label{sec:cont-time-dana}
In this section, we develop a continuous-time Newton-like algorithm to
distributively solve $\Pc 2$ for quadratic cost functions. Our method
borrows from and expands upon known results of gradient-based
saddle-point dynamics~\cite{AC-EM-SHL-JC:18-tac}. We provide a rigorous
proof of convergence and an interpretation of the convergence result
for various parameters of the proposed algorithm.
\subsection{Formulation of Continuous Time Dynamics}
First, we adopt a stronger version of
Assumption~\ref{ass:cost}:
\begin{assump}\longthmtitle{Quadratic Cost Functions}\label{ass:quad-cost}
	The local costs $f_i$ are strongly convex and quadratic, i.e. they take the form
	\begin{equation*}
	f_i(x_i) = \dfrac{1}{2}a_i x_i^2 + b_i x_i, \quad i\in\until{n}.
	\end{equation*}
\end{assump}
Note that the Hessian of $f$ with respect to $x$ is now constant, so
we omit the arguments of $H$ and $A_q$ for the remainder of this
section. The dynamics we intend to use to solve $\Pc 2$ are
substantially more complex than those for the problem with no box
constraints, which makes this simplification necessary. In fact, the
quadratic model is very commonly used for generator costs in power
grid operation~\cite{AW-BW-GS:12}.

We aim to solve $\Pc 2$ by finding a saddle point of the associated
Lagrangian $\Ls$.  Introduce the dual variable $\lambda\in\real^{2n}$
corresponding to~\eqref{eq:p2-box-const1}--\eqref{eq:p2-box-const2},
and define $P(z)$ as
\begin{equation*}
P(z) = \begin{bmatrix}
\underline{P}(z) \\ \overline{P}(z)
\end{bmatrix} =
\begin{bmatrix}
\underline{x} - x^0 - Lz \\ x^0 + Lz - \overline{x}
\end{bmatrix} \in\real^{2n}.
\end{equation*}
The Lagrangian of $\Pc 2$ is given by
\begin{equation}
\Ls (z,\lambda) = g(z) + \lambda^\top P(z). \label{eq:Lagrangian}
\end{equation}
We aim to design distributed dynamics which converge to a saddle point
$(z^\star ,\lambda^\star )$ of~\eqref{eq:Lagrangian}, which solves
$\Pc 2$. A saddle point has the property
\begin{equation*}
\Ls (z^\star,\lambda) \leq \Ls (z^\star,\lambda^\star) 
\leq \Ls (z,\lambda^\star), \quad \forall z\in\real^n , \lambda\in\real^n_{\geq 0}.
\end{equation*}
To solve this, consider Newton-like descent dynamics in the primal
variable $z$ and gradient ascent dynamics in the dual variable
$\lambda$ (Newton dynamics are not well defined for linear
functions). First, we state some equivalencies:
\begin{equation}\label{eq:equiv}
\begin{aligned}
\nabla_z \Ls (z,\lambda) &= \nabla_z g(z) + \begin{bmatrix}-L &
L\end{bmatrix}\lambda, \\
\nabla_\lambda \Ls (z,\lambda) &= P(z), \\
\nabla_{zz}\Ls (z,\lambda) &= LHL, \\
\nabla_{\lambda\lambda}\Ls (z,\lambda) &= \zeros[2n\times 2n], \\
\nabla_{\lambda z}\Ls (z,\lambda) &= \nabla_{z \lambda}\Ls
(z,\lambda)^\top
= \begin{bmatrix}-L & L \end{bmatrix}. \\
\end{aligned}
\end{equation}
The $\distnewtoncont$, or $\danac$, dynamics are given by
\begin{equation}\label{eq:approx-newt-dynamics}
\begin{aligned}
\dot{z} &= -A_q \nabla_z \Ls (z,\lambda), \\
\dot{\lambda} &= \left[ \nabla_\lambda \Ls (z,\lambda)  \right]^+_\lambda .
\end{aligned}
\end{equation}
The descent in the primal variable $z$ is the approximate Newton
direction as~\eqref{eq:short algorithm}, augmented with dual ascent
dynamics in $\lambda$ (one-hop communication) and implemented in
continuous time. The projection on the dynamics in $\lambda$ ensures
that if $\lambda_i(t_0) \geq 0 $ then $ \lambda_i(t) \geq 0$ for all
$t \geq t_0$.

Define $\mathscr{Z}_q: \real^n \times \real^{2n}_{\geq 0} \rightarrow \real^n
\times \real^{2n}$ as the map in~\eqref{eq:approx-newt-dynamics}
implemented by $\danac$. We now make the following assumptions on
initial conditions and the feasibility set.
\begin{assump}\longthmtitle{Initial Dual Feasibility}\label{ass:init-cond-cont-time}
	The initial condition $\lambda(0)$ is dual feasible,
	i.e. $\lambda(0) \succeq 0$.
\end{assump}
\begin{assump}\longthmtitle{Nontrivial Primal Feasibility}\label{ass:nontrivial-sol}
	The feasibility set of $\Pc 2$ is such that $\exists z$ with $P(z) \prec 0$.
\end{assump}
The dynamics $\mathscr{Z}_q$ are not well suited to handle $\lambda$
infeasible, so Assumption~\ref{ass:init-cond-cont-time} is necessary. As for Assumption~\ref{ass:nontrivial-sol}, if it does
not hold, then either $d = \sum \underline{x}$ or $d = \sum
\overline{x}$ or $\Pc 1$ is infeasible, which are trivial
cases. Assuming it does hold, Slater's condition is satisfied and KKT
conditions are necessary and sufficient for solving $\Pc 2$.

Due to the structure of $L$, $\dot{z}$ is computed using only
$(2q+1)$-hop neighbor information. In practice, the quantity $A_q
\nabla_z\mathscr{L}(z,\lambda)$ may be computed recursively over
multiple one-hop or two-hop rounds of communication, with a discrete
step taken in the direction indicated by
$(\dot{z},\dot{\lambda})$. Note that a table statement of this discretized algorithm would be quite similar to Algorithm~\ref{alg:approx-newton} (with the addition of one-hop dynamics in $\lambda$), so we omit it here for brevity.
Discrete-time algorithms to solve this problem do exist, see e.g.~\cite{ER-SM:16-ocam} in which the authors achieve convergence to a ball around the optimizer whose radius is a function of the step size. However, the analysis of discrete-time algorithms to solve $\Pc 2$ via a Newton-like method is outside the scope of this work.
\subsection{Convergence Analysis}
This section provides a rigorous proof of convergence of the
distributed dynamics $\mathscr{Z}_q$ to the optimizer $(z^\star
,\lambda^\star)$ of $\Pc 2$. The solution $x^\star$ to $\Pc 1$ may
then be computed via a one-hop neighbor communication by $x^\star =
x^0 + Lz^\star$.
\begin{thrm}\longthmtitle{Convergence of Continuous Dynamics
    $\mathscr{Z}_q$} \label{thm:cvg-dana-c}\rm
  If Assumption~\ref{ass:conn-graph}, on the undirected and connected
  graph, Assumption~\ref{ass:initial}, on the feasible initial
  condition, Assumption~\ref{ass:e-vals}, on convergent eigenvalues,
  Assumption~\ref{ass:quad-cost}, on quadratic cost functions,
  Assumption~\ref{ass:init-cond-cont-time}, on the feasible dual
  initial condition, and Assumption~\ref{ass:nontrivial-sol}, on
  nontrivial primal feasibility,
	hold, then the solution trajectories under $\mathscr{Z}_q$
	assymptotically converge to an optimal point $(z^\star ,
	\lambda^\star)$ of $\Pc 2$, where $z^\star$ uniquely satisfies
	$\ones[n]^\top z^\star = \ones[n]^\top z(0)$.
\end{thrm}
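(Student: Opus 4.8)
The plan is to adapt the saddle-point framework of~\cite{AC-EM-SHL-JC:18-tac} to the approximate-Newton primal descent $-A_q\nabla_z\Ls$. The structural fact that makes this work is that, under Assumption~\ref{ass:quad-cost}, $H$ and hence $A_q$ are \emph{constant}, and from the diagonalization of $A_q$ in the proof of Theorem~\ref{thm:cvg-dana-d} its eigenvalues are $\tfrac{1-\eta_i^{q+1}}{1-\eta_i}>0$ together with $q+1$ along $\ones[n]$, so $A_q\succ 0$. This suggests working with the weighted candidate Lyapunov function
\begin{equation*}
V(z,\lambda) = \frac{1}{2}(z-z^\star)^\top A_q^{-1}(z-z^\star) + \frac{1}{2}\|\lambda-\lambda^\star\|_2^2,
\end{equation*}
where $(z^\star,\lambda^\star)$ is a saddle point of $\Ls$. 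Existence of such a point, and the equivalence of the saddle condition with optimality of $\Pc 2$, follows from Slater's condition, which is guaranteed by Assumption~\ref{ass:nontrivial-sol}. Since $A_q^{-1}\succ 0$, the function $V$ is positive definite and radially unbounded in $(z-z^\star,\lambda-\lambda^\star)$, so its sublevel sets are compact.

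Before differentiating $V$, I would record two invariances of the flow. First, since $A_q\ones[n]=(q+1)\ones[n]$ and $\ones[n]^\top\nabla_z\Ls=0$ (because $L\ones[n]=\zeros[n]$), one checks $\ones[n]^\top\dot z = -(q+1)\ones[n]^\top\nabla_z\Ls = 0$, so $\ones[n]^\top z$ is conserved and the trajectory remains on the hyperplane containing the unique primal optimizer $z^\star$ fixed by $\ones[n]^\top z^\star=\ones[n]^\top z(0)$. Second, the projection $[\cdot]^+_\lambda$ keeps $\lambda\succeq0$ for all time (Assumption~\ref{ass:init-cond-cont-time}). Differentiating, the weighting cancels the Newton matrix, $A_q^{-1}\dot z = -\nabla_z\Ls$, so
\begin{equation*}
\dot V = -(z-z^\star)^\top\nabla_z\Ls(z,\lambda) + (\lambda-\lambda^\star)^\top[\nabla_\lambda\Ls(z,\lambda)]^+_\lambda.
\end{equation*}
Convexity of $\Ls(\cdot,\lambda)$ bounds the first term by $\Ls(z^\star,\lambda)-\Ls(z,\lambda)$; the standard projection inequality $(\lambda-\lambda^\star)^\top[u]^+_\lambda\le(\lambda-\lambda^\star)^\top u$ (valid since $\lambda,\lambda^\star\succeq0$) together with the linearity of $\Ls$ in $\lambda$ bounds the second term by $\Ls(z,\lambda)-\Ls(z,\lambda^\star)$. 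Summing and invoking the saddle inequalities yields $\dot V \le \Ls(z^\star,\lambda)-\Ls(z,\lambda^\star)\le 0$.

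With $\dot V\le 0$ and bounded trajectories, I would invoke the LaSalle invariance principle for the discontinuous (projected) dynamics $\mathscr{Z}_q$ to conclude that solutions approach the largest invariant set contained in $\{\dot V=0\}$. On that set all the inequalities above hold with equality; equality in the primal convexity step, combined with the fact that $g$ is strictly convex on the hyperplane $\ones[n]^\top z=\ones[n]^\top z(0)$ (the nonzero spectrum of $LHL$ is positive, with $\nulo(LHL)=\spn(\ones[n])$), forces $z\equiv z^\star$. Then $\dot z\equiv 0$ gives $\nabla_z\Ls(z^\star,\lambda)=0$, and examining the reduced $\lambda$-dynamics on this set via complementary slackness and the KKT characterization shows the invariant set is contained in the saddle set $\{z^\star\}\times\Lambda^\star$.

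The dual convergence is where I expect the main difficulty: because $\Ls$ is only linear (not strictly concave) in $\lambda$, the condition $\dot V=0$ does not by itself pin down $\lambda$, and the set of dual optimizers $\Lambda^\star$ may be nonunique. To finish, I would use the standard $\omega$-limit argument: the $\omega$-limit set is nonempty, compact, invariant, and contained in the saddle set; picking any point $(z^\star,\bar\lambda)$ in it and rebuilding $V$ about that point (legitimate because the estimate $\dot V\le0$ holds relative to every saddle point) shows that $V$ is nonincreasing and vanishes along a subsequence, hence $V\to 0$ along the whole trajectory. This yields convergence of the entire trajectory to the single saddle point $(z^\star,\bar\lambda)$, with $z^\star$ the asserted unique primal optimizer satisfying $\ones[n]^\top z^\star=\ones[n]^\top z(0)$.
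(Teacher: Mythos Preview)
Your approach is essentially the paper's: the same Lyapunov candidate $V=\tfrac12(z-z^\star)^\top A_q^{-1}(z-z^\star)+\tfrac12\|\lambda-\lambda^\star\|_2^2$, the same cancellation $A_q^{-1}A_q$ that reduces the primal term to $-(z-z^\star)^\top\nabla_z\Ls$, the same projection inequality, and LaSalle. The two routes diverge only in bookkeeping. You bound $\dot V$ via the abstract convexity/saddle chain $\dot V\le \Ls(z^\star,\lambda)-\Ls(z,\lambda^\star)\le 0$; the paper instead exploits the quadratic structure directly, computing (via the Fundamental Theorem of Calculus with constant $\nabla_{zz}\Ls=LHL$) the explicit bound $\dot V\le -\|H^{1/2}L(z-z^\star)\|_2^2$, which makes the strict-negativity for $z\neq z^\star$ immediate without tracing equality through the saddle inequalities. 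For the dual, you invoke a generic $\omega$-limit argument against a possibly non-singleton $\Lambda^\star$; the paper instead performs a concrete two-case analysis on $\underline P(z^\star)$ (and similarly $\overline P(z^\star)$): if some component vanishes, the null-space structure of $L$ forces $\dot{\underline\lambda}\in\spn(\ones[n])$ on the invariant set, and the projection then pins each $\underline\lambda_j$ to satisfy complementary slackness; if $\underline P(z^\star)\prec 0$, then $\underline\lambda^\star=0$ and the projection term yields $\dot V<0$ whenever $\underline\lambda\neq 0$. Your phrase ``examining the reduced $\lambda$-dynamics via complementary slackness'' is precisely where this case analysis has to live; the $\omega$-limit step you add afterward is sound but still presupposes it, since you need the $\omega$-limit point to be a saddle before rebuilding $V$ there.
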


\begin{proof}
	Consider $Q = \begin{bmatrix} A_q^{-1} & 0 \\ 0 &
	I_{2n} \end{bmatrix} \succ 0$ and define the Lyapunov function
	\begin{equation}\label{eq:vq}
	\begin{aligned}
	V_Q(z,\lambda) &:= \dfrac{1}{2}\begin{bmatrix}
	z - z^\star \\
	\lambda - \lambda^\star
	\end{bmatrix}^\top Q \begin{bmatrix}
	z - z^\star \\ 
	\lambda - \lambda^\star
	\end{bmatrix} \\
	&= \dfrac{1}{2}\Big(\|
	A_q^{-1/2}(z-z^\star)\|_2^2 + \| (\lambda -
	\lambda^\star)\|_2^2\Big).
	\end{aligned}
	\end{equation}
	The time derivative of $V_Q$ along the trajectories of $\mathscr{Z}_q$ is 
	\begin{equation}\label{eq:vq-dot}
	\begin{aligned}
	\dot{V}_Q(z,\lambda) &= \begin{bmatrix}
	z - z^\star \\
	\lambda - \lambda^\star
	\end{bmatrix}^\top Q \begin{bmatrix}
	\dot{z} \\
	\dot{\lambda}
	\end{bmatrix}  \\
	&= -(z-z^\star)^\top A_q^{-1}A_q \nabla_z \Ls(z,\lambda) \\
	& \qquad+ (\lambda-\lambda^\star)^\top \left[ \nabla_\lambda \Ls(z,\lambda) \right]^+_\lambda \\
	&\overset{(a)}{\leq} -(z-z^\star)^\top \nabla_z \Ls(z,\lambda) + (\lambda-\lambda^\star)^\top \nabla_\lambda \Ls(z,\lambda) \\
	&\overset{(b)}{=} -(z-z^\star)^\top LHL (z-z^\star) \\
	& \qquad - (z-z^\star)^\top \begin{bmatrix} -L & L \end{bmatrix} (\lambda - \lambda^\star) \\
	& \qquad + (\lambda - \lambda^\star)^\top \begin{bmatrix} -L & L \end{bmatrix}^\top (z-z^\star) \\
	&= -\| H^{1/2}L (z-z^\star)\|_2^2 \overset{(c)}{<} 0, \ z\neq z^\star.
	\end{aligned}
	\end{equation}
	The inequality (a) follows from the componentwise relation
	$(\lambda_i - \lambda_i^\star)(\left[ \nabla_{\lambda_i} \Ls
	\right]^+_{\lambda_i} - \nabla_{\lambda_i} \Ls ) \leq 0$. To see
	this, if $\lambda_i > 0$, the projection is inactive and this term
	equals zero. If $\lambda_i = 0$, then the inequality follows from
	$\lambda_i^\star \geq 0$ and $\left[ \nabla_{\lambda_i} \Ls
	\right]^+_{\lambda_i} - \nabla_{\lambda_i} \Ls \geq 0$. The equality
	(b) is obtained from an application of the Fundamental Theorem of
	Calculus and computing the line integral along the line
	$(z(s),\lambda(s)) = s(z,\lambda) + (1-s)(z^\star,\lambda^\star)$ as
	follows:
	\begin{equation*}
	\begin{aligned}
	\nabla_z \Ls(z,\lambda) &= \int_0^1 
	\Big(\nabla_{zz}\Ls(z(s),\lambda(s))(z-z^\star) \\
	&\qquad + \nabla_{\lambda z}\Ls(z(s),\lambda(s))
	(\lambda-\lambda^\star)\Big)ds \\
	&= \nabla_{zz}\Ls(z,\lambda)(z-z^\star) + 
	\nabla_{\lambda z}\Ls(z,\lambda)(\lambda-\lambda^\star), \\
	\nabla_\lambda \Ls(z,\lambda)
	&= \int_0^1 
	\Big(\nabla_{\lambda\lambda}\Ls(z(s),\lambda(s)(\lambda-\lambda^\star) \\
	&\qquad + \nabla_{z\lambda}\Ls(z(s),\lambda(s))(z-z^\star)\Big)ds \\
	&= \nabla_{z\lambda}\Ls(z,\lambda)(z-z^\star),
	\end{aligned}
	\end{equation*}
	where the integrals can be simplified due to $\nabla_{zz}\Ls$ and $\nabla_{\lambda z}\Ls$ constant, as per~\eqref{eq:equiv}. Recalling Remark~\ref{rem:init-traj-sol}, which applies similarly
	here, and noticing $\dot{z}\perp\ones[n]$, it follows from the theorem
	statement that $(z-z^\star)\perp \ones[n]$. Additionally, zero is
	a simple eigenvalue of $H^{1/2}L$ with a corresponding right
	eigenvector $\ones[n]$, implying that (c), the last line of~\eqref{eq:vq-dot}, is strict for $z\neq
	z^\star$.
	
	Let $\mathcal{S} := \setdefB{(z,\lambda)}{z = z^\star ,
		\lambda \succeq 0}$ be
        an asymptotically stable set under the dynamics
        $\mathscr{Z}_q$ defined
        in~\eqref{eq:approx-newt-dynamics}. 
        We aim to show the largest invariant set contained in $\mathcal{S}$ is the optimizer $\{(z^\star,\lambda^\star)\}$,
        so we reason with KKT conditions to
        complete the convergence argument for $\lambda$. For
        $(z,\lambda)\in\mathcal{S}$, clearly primal feasibility is
        satisfied. Assumption~\ref{ass:init-cond-cont-time} gives
        feasibility of $\lambda (0)$, which is maintained along the
        trajectories of $\mathscr{Z}_q$. The stationarity condition
        $\nabla_z \Ls(z^\star,\lambda^\star) = 0$ is also satisfied
        for $(z,\lambda)\in\mathcal{S}$: 
	examine the dynamics $\dot{z}(t) =
	-A_q \nabla_z \Ls(z,\lambda) \equiv 0$.
	It follows that $\nabla_z \Ls(z,\lambda)_{(z,\lambda)\in\mathcal{S}} = 0$ due to $A_q$ being full rank. Then, each KKT condition has been
	satisfied for $(z,\lambda)\in\mathcal{S}$ except
	complementary slackness: $P_i(z) \lambda_i = 0$ for $i \in \until{2n}$. We now address this.
	
	Notice the relation $\dot{z} \equiv 0$
          implies
	\begin{equation}\label{eq:lambda-all-ones}
	\lambda(t) = \hat{\lambda} + \phi_{\underline{\lambda}} (t)
	\begin{bmatrix}
	\ones[n] \\ \zeros[n]
	\end{bmatrix}
	+ \phi_{\overline{\lambda}} (t)
	\begin{bmatrix}
	\zeros[n] \\ \ones[n]
	\end{bmatrix}
	\end{equation}
	for some constant $\hat{\lambda}\in\real^{2n}$ and possibly time
	varying $\phi_{\underline{\lambda}} (t), \phi_{\overline{\lambda}}
	(t)\in\real$. This is due to $\nulo{L} = \spn{\{\ones[n]\}}$ and
	inferring from $\dot{z} \equiv 0$ that $\begin{bmatrix} -L & L
	\end{bmatrix}\lambda (t)$ must be constant. 
	Additionally, we may infer from the map $\mathscr{Z}_q$ that
	$\phi_{\underline{\lambda}} (t), \phi_{\overline{\lambda}}
	(t)$ are continuous and piecewise smooth. The
	dynamics $\dot{\lambda}$ and
	differentiating~\eqref{eq:lambda-all-ones} in time gives
	\begin{equation}\label{eq:lambda-dot-all-ones}
	\begin{aligned}
	\dot{\lambda} = \left[ \nabla_\lambda \Ls(z^\star,\lambda)
	\right]^+_\lambda &= \left[ P(z^\star)  \right]^+_\lambda \\
	&\in \partial \phi_{\underline{\lambda}} (t)
	\begin{bmatrix}
	\mathbf{1}_{n} \\ \zeros[n]
	\end{bmatrix} 
	+ \partial \phi_{\overline{\lambda}} (t)
	\begin{bmatrix}
	\mathbf{0}_{n} \\ \mathbf{1}_n
	\end{bmatrix},
	\end{aligned}
	\end{equation}
	where $\partial \phi_{\underline{\lambda}} (t)$ and $\partial
	\phi_{\overline{\lambda}} (t)$ are subdifferentials with respect to time
	of
	$\phi_{\underline{\lambda}} (t)$ and $\phi_{\overline{\lambda}} (t)$,
	respectively. Then, $\phi_{\underline{\lambda}} (t)$ and
	$\phi_{\overline{\lambda}} (t)$ are additionally piecewise linear due
	to $P(z^\star)$ constant.
	We now state two cases for $\underline{P}(z^\star)$ to prove
	$\underline{\lambda}(t)\rightarrow\underline{\lambda}^\star$.
	
	\textbf{Case 1:} $\underline{P}_i(z^\star) = 0$ for at least one
	$i\in\until{n}$. Then, $\dot{\underline{\lambda}}_i = 0$ and
	from~\eqref{eq:lambda-dot-all-ones} this implies
	$\dot{\underline{\lambda}} = \mathbf{0}_{n}$. Reasoning from the
	projection dynamics, this implies either $\underline{\lambda}_j = 0$
	or $\underline{P}_j(z^\star) = 0$ for each $j$, which satisfies the
	complementary slackness condition $\underline{\lambda}_j^\star
	\underline{P}_j(z^\star) = 0$ for every $j\in\until{n}$, and we
	conclude that $\underline{\lambda} = \underline{\lambda}^\star$ for
	$(z,\lambda)\in\mathcal{S}$.
	
	\textbf{Case 2:} $\underline{P}(z^\star)\prec
	0$.
	Complementary slackness states $\underline{\lambda}_i^\star
	\underline{P}_i(z^\star) = 0$ for each $i\in\until{n}$, implying
	$\underline{\lambda}^\star = \zeros[n]$. The dynamics preserve
	$\lambda(t) \succeq 0$, so the quantity $\underline{\lambda}_i -
	\underline{\lambda}_i^\star$ is strictly positive for any
	$\underline{\lambda}_i \neq \underline{\lambda}_i^\star$. Applying this to the term $(\lambda-\lambda^\star)^\top[\nabla_\lambda\Ls(z,\lambda)]_\lambda^+$ obtained from the second equality (third line) of~\eqref{eq:vq-dot}, and also applying
	$\underline{P}(z^\star)=\nabla_\lambda\Ls(z^\star,\lambda)\prec 0$, we obtain $\dot{V}_Q < 0$ for $z =
	z^\star, \underline{\lambda} \neq
	\underline{\lambda}^\star$.
	
	The inferences of Case 1 (satisfying complementary slackness)
        and Case 2 (reasoning with $\dot{V}_Q$) hold similarly for
        $\overline{\lambda}$. Then, we have shown that
        $\dot{V}_Q(z,\lambda) < 0, \forall (z,\lambda)\in\mathcal{S}\setminus\{(z^\star,\lambda^\star)\}$.
	Asymptotic convergence to the primal and dual optimizers of
        $\Pc 2$ follows from the LaSalle Invariance Principle~\cite{HKK:02}.
\end{proof}
\subsection{Interpretation of the Convergence Result}\label{ssec:interpretation}
For fast convergence, it is desirable for the ratio $\dot{V}_Q / V_Q <
0$ to be large in magnitude for any $(z,\lambda)\in\real^n \times
\real_{+}^{2n}$. Recall the diagonalization of $A_q$ and use this to
compute $A_q^{-1}$:
\begin{align*}
A_q &= W \begin{bmatrix}
\dfrac{1 - \eta_1^{q+1}}{1 - \eta_1} & & & \\
& \ddots & & \\
& & \dfrac{1 - \eta_{n-1}^{q+1}}{1 - \eta_{n-1}} \\
& & & q+1
\end{bmatrix} W^\top ,\displaybreak[0]\\
A_q^{-1} &= W \begin{bmatrix}
\dfrac{1 - \eta_1}{1 - \eta_1^{q+1}} & & & \\
& \ddots & & \\
& & \dfrac{1 - \eta_{n-1}}{1 - \eta_{n-1}^{q+1}} \\
& & & (q+1)^{-1}
\end{bmatrix} W^\top .
\end{align*}
Next, write $z - z^\star = \zeta_1w_1 + \dots + \zeta_{n-1}w_{n-1}$ as
a weighted sum of the eigenvectors $w_i$ of $I_n - LHL$. Note that we
do not need $w_n = \mathbf{1}_{n}$ for this representation due to $z -
z^\star \perp w_n$. Then, $V_Q = \sum_{i=1}^{n-1} \zeta_i^2
(1-\eta_i)/(1-\eta_i^{q+1}) + V_\lambda$, where $V_\lambda :=
\vert\vert \lambda - \lambda^\star \vert\vert_2^2$. Additionally, note
that $LHL$ and $A_q^{-1}$ share eigenvectors, so $\dot{V}_Q \leq
-\sum_{i=1}^{n-1} \zeta_i^2 (1-\eta_i)$. 
Toward this end, we can write
\begin{equation*}
\dfrac{\dot{V}_Q}{V_Q} \leq \dfrac{-\sum_{i=1}^{n-1} 
	\zeta_i^2(1- \eta_i)}{\sum_{i=1}^{n-1} \zeta_i^2\left(\dfrac{1- 
		\eta_i}{1-\eta_i^{q+1}}\right) + V_\lambda}.
\end{equation*}
To interpret this, first reason
with the values of $q$. Consider $q=0$, which is analogous to a
gradient-based method. Then, the rational in the sum contained in the
denominator is equal to one and there is no \emph{weighting}, in a
sense, to the step direction. In other words, if the value of
$\zeta_i$ happens to be large in magnitude corresponding to the
eigenvector $w_i$ of $\nabla_{zz}\mathscr{L}$ whose corresponding
eigenvalue $(1-\eta_i)$ is small in magnitude, then that term does not
appropriately dominate the numerator relative to each other term and
the quantity $\dot{V}_Q / V_Q$ is small in magnitude. On the other hand,
if
$q$ is large, then the quantity $1-\eta_i^{q+1}$ is close to $1$, and
the terms of the sums in the numerator and denominator have the effect
of ``cancelling'' one another, which provides more uniform convergence
on the trajectories of $z$. 
In addition, if the values of $\eta_i$ are small in magnitude,
i.e. our weight design on $L$ was relatively successful, the quantity $1-\eta_i^{q+1}$ approaches $1$ more quickly
and the effect of a particular $\zeta_i$ being large relative to the
other terms in the sum is diminished for any particular $q$.

Note that, although we have framed this argument as an improvement
over the gradient technique, it may be the case that for a particular
time $t$ the decomposition on $z(t)$ may have a large $\zeta_i$
corresponding to $1-\eta_i$ large. This actually provides superior
momentary convergence compared to a Newton-like method. However, we
contend that the oscillatory nature of the trajectories over the
entire time horizon gives way to improved convergence from the Newton
flavor of our algorithm. This is confirmed in simulation.

Finally, it is apparent that choosing $q$ even is (generally speaking)
superior to $q$ odd: the quantity $1-\eta_i^{q+1}$ may take values in
$\left[ 1-\varepsilon^{q+1}, 1+\varepsilon^{q+1}\right]$, as opposed
to odd $q$ for which $1-\eta_i^{q+1}$ takes values in $\left[
1-\varepsilon^{q+1}, 1\right]$. We would like this quantity to be
large so the magnitude of $\dot{V}_Q /V_Q$ is large. This observation
of choosing even $q$ to prompt superior convergence is confirmed in
simulation.

This discussion neglects the $V_\lambda$ term which may be large
for arbitrarily "bad" initial conditions $\lambda(0) \succeq
0$. However, the ascent direction in $\lambda$
is clearly more effective for $z$ nearly optimal, so this term is
``cooperative'' in the sense that its decay roughly corresponds to
the decay of the Lyapunov term in $z$.

To summarize, gradient methods neglect the curvature of the underlying
cost function, which dictates the convergence properties of descent
algorithms. By weighting the descent direction by $A_q$, we elegantly
capture this curvature in a distributed fashion and the solution
trajectory reflects this property. We now provide a remark on
convergence of the algorithm for nonquadratic costs that are well approximated by quadratic functions.

\begin{rem}\longthmtitle{Convergence of $\danac$ for Approximately Quadratic Costs}
	Instead of Assumption~\ref{ass:quad-cost} (quadratic costs), let
	Assumption~\ref{ass:cost} (general costs) hold and consider the
	dynamics
	\begin{equation}\label{eq:cont-dyn-nonquadr}
	\begin{aligned}
	\dot{z} &= -A_q(z) \nabla_z \Ls (z,\lambda), \\
	\dot{\lambda} &= \left[ \nabla_\lambda \Ls (z,\lambda)  \right]^+_\lambda .
	\end{aligned}
	\end{equation}
	Let $H^\prime:= \dfrac{H_\Delta + H_\delta}{2}$ and
	$A_q^\prime:=\sum_{p=0}^q (I_n - LH^\prime L)^p$. In a sense, these
	matrices are obtained from quadratic approximations of the
	nonquadratic costs $f_i$, i.e. $\left| \dfrac{\partial^2
		f_i}{\partial x_i^2} - H^\prime_{ii} \right| \leq
	\dfrac{\Delta_i - \delta_i}{2}$. Use $Q = \begin{bmatrix}
	A_q^{\prime -1} & 0 \\ 0 & I_{2n} \end{bmatrix}$ to define the
	quadratic Lyapunov function $V_Q(z,\lambda)$ as
	in~\eqref{eq:vq}. Differentiating along the trajectories
	of~\eqref{eq:cont-dyn-nonquadr} now gives
	\begin{equation*}
	\dot{V}_Q(z,\lambda) = \dot{V}^\prime_Q(z,\lambda) + U(e,z,\lambda),
	\end{equation*}
	where $e$ gives some measure of how much the functions deviate from
	quadratic and $U(0,z,\lambda) = 0$. The
	$\dot{V}^\prime_Q(z,\lambda)$ is obtained by decomposing the
	dynamics~\eqref{eq:cont-dyn-nonquadr} as
	\begin{equation*}
	\begin{aligned}
	\dot{z} &= -A_q^\prime \nabla_z \Ls (z,\lambda) + u(e,z,\lambda), \\
	\dot{\lambda} &= \left[ \nabla_\lambda \Ls (z,\lambda)
	\right]^+_\lambda .
	\end{aligned}
	\end{equation*}
	and including only the terms without $u(e,z,\lambda)$, where the
	remaining terms are captured by $U(e,z,\lambda)$. $U$ and $u$
	are continuous functions of $e$, and $u(0,z,\lambda) = 0$. Applying
	the convergence argument of Theorem~\ref{thm:cvg-dana-c} to
	$V^\prime_Q(z,\lambda)$, the continuity of $U$ and $u$ imply
	$\dot{V}^\prime_Q(z,\lambda) < -U(\bar{e},z,\lambda)$ for
	sufficiently small $\bar{e}$. Therefore, $\dot{V}_Q(z,\lambda) < 0$
	for functions that are well approximated by quadratic functions.
\end{rem}
\section{Simulations and Discussion}		\label{sec:sims-discuss}
In this section, we implement our weight design and verify the
convergence of the $\distnewton$ algorithm in each of the discrete-time (relaxed) and continuous-time (box-constrained) settings.
\subsection{Weight Design}
To evaluate the weight design posed in
Section~\ref{sec:laplacian-design} we use quadratic costs in
accordance with Assumption~\ref{ass:quad-cost}, i.e. $\delta_i = \Delta_i = a_i, \forall i$. We do this in order to isolate the
other parameters for this part of the study. Consider the following metrics: the
solution to $\Pc 4$ followed by the post-scaling by $\beta$ gives
$\varepsilon_{L^\star} := \max(\vert 1 - \mu_i(M^\star)\vert )$; this
metric represents the convergence speed of $\distnewton$ when applying
our proposed weight design of $L$.  Using the same topology $(\nodes,
\mathcal{E})$, the solution to $\Pc 5$ gives the metric
$\varepsilon_A$. Note that $\varepsilon_A$ is a \emph{best-case}
estimate of the weight design problem; however, ``reverse engineering" an $L^\star$ from the solution $A^\star$ to $\Pc 5$ is both intractable and generally likely to be infeasible. With this in mind, the metric $\varepsilon_A$ is a very conservative lower bound, whereas $\varepsilon_{L^\star}$ is the metric for which we can
compute a feasible $L^\star$. The objective of each problem is to minimize the
associated $\varepsilon$; to this end, we aim to characterize the
relationship between network parameters and these metrics. We ran 100
trials on each of 16 test cases which encapsulate a variety of
parameter cases: two cases for the cost coefficients, a \textit{tight}
distribution $a_i \in \mathcal{U}\left[ 0.8, 1.2\right]$ and a
\textit{wide} distribution $a_i \in \mathcal{U}\left[ 0.2,
5\right]$. For topologies, we randomly generated connected graphs
with network size $n \in \{10, 20, 30, 40, 50\}$, a \textit{linearly}
scaled number of edges $\vert\mathcal{E}\vert = 3n$, and a
\textit{quadratically} scaled number of edges $\vert\mathcal{E}\vert =
0.16n^2$ for $n \in \{30, 40, 50\}$. The linearly scaled
connectivity case corresponds to keeping the average degree of a node
constant for increasing network sizes, while the quadratically scaled
case roughly preserves the proportion of connected edges to total
possible edges, which is a quadratic function of $n$ and equal to
$n(n-1)/2$ for an undirected network. The results are depicted in
Table~\ref{L-design-table}, where the quadratically scaled cases are indicated by boldface. This gives the mean $\Sigma$ and standard
deviation $\sigma$ of the distributions for \emph{performance}
$\varepsilon_{L^\star}$ and \emph{performance gap}
$\varepsilon_{L^\star} - \varepsilon_A$.
\begin{table}[]
	\centering
	\caption{Laplacian Design. Quadratically-scaled number-of-edge cases are indicated by boldface.}
	\label{L-design-table}
	\begin{tabular}{|c||c|c|c|c|}
		\hline
		\begin{tabular}[c]{@{}c@{}} $a_i \in \mathcal{U} \left[ 0.8, 1.2\right]$ \\ $b_i \in \mathcal{U} \left[ 0, 1\right]$\end{tabular} & $\Sigma ( \varepsilon_{L^\star})$ & $\sigma (\varepsilon_{L^\star})$ & $\Sigma ( \varepsilon_{L^\star} - \varepsilon_A)$ & $\sigma ( \varepsilon_{L^\star} - \varepsilon_A)$ \\ \hline
		\begin{tabular}[c]{@{}c@{}}$n = 10$ \\ $\vert \mathcal{E} \vert = 30$ \end{tabular}  & 0.6343 & 0.0599 & 0.2767 & 0.0186 \\ \hline
		\begin{tabular}[c]{@{}c@{}}$n = 20$ \\ $\vert \mathcal{E} \vert = 60$ \end{tabular}  & 0.8655 & 0.0383 & 0.2879 & 0.0217 \\ \hline
		\begin{tabular}[c]{@{}c@{}}$n = 30$ \\ $\vert \mathcal{E} \vert = 90$ \end{tabular}  & 0.9100 & 0.0250 & 0.2666 & 0.0233 \\ \hline
		\begin{tabular}[c]{@{}c@{}}$n = 40$ \\ $\vert \mathcal{E} \vert = 120$ \end{tabular}  & 0.9303 & 0.0201 & 0.2501 & 0.0264 \\ \hline
		\begin{tabular}[c]{@{}c@{}}$n = 50$ \\ $\vert \mathcal{E} \vert = 150$ \end{tabular}  & 0.9422 & 0.0175 & 0.2375 & 0.0264 \\ \hline
		\begin{tabular}[c]{@{}c@{}}$n = 30$ \\ $\mathbf{\vert \mathcal{E} \vert = 144}$ \end{tabular}  & 0.7266 & 0.0324 & 0.2973 & 0.0070 \\ \hline
		\begin{tabular}[c]{@{}c@{}}$n = 40$ \\ $\mathbf{\vert \mathcal{E} \vert = 256}$ \end{tabular}  & 0.6528 & 0.0366 & 0.2829 & 0.0091 \\ \hline
		\begin{tabular}[c]{@{}c@{}}$n = 50$ \\ $\mathbf{\vert \mathcal{E} \vert = 400}$ \end{tabular}  & 0.5840 & 0.0281 & 0.2641 & 0.0101 \\ \hline \hline
		\begin{tabular}[c]{@{}c@{}} $a_i \in \mathcal{U} \left[ 0.2, 5\right]$ \\ $b_i \in \mathcal{U} \left[ 0, 1\right]$\end{tabular} & $\Sigma ( \varepsilon_{L^\star})$ & $\sigma (\varepsilon_{L^\star})$ & $\Sigma ( \varepsilon_{L^\star} - \varepsilon_A)$ & $\sigma ( \varepsilon_{L^\star} - \varepsilon_A)$ \\ \hline
		\begin{tabular}[c]{@{}c@{}}$n = 10$ \\ $\vert \mathcal{E} \vert = 30$ \end{tabular}  & 0.6885 & 0.0831 & 0.3288 & 0.0769 \\ \hline
		\begin{tabular}[c]{@{}c@{}}$n = 20$ \\ $\vert \mathcal{E} \vert = 60$ \end{tabular}  & 0.8965 & 0.0410 & 0.3241 & 0.0437 \\ \hline
		\begin{tabular}[c]{@{}c@{}}$n = 30$ \\ $\vert \mathcal{E} \vert = 90$ \end{tabular}  & 0.9389 & 0.0254 & 0.2878 & 0.0395 \\ \hline
		\begin{tabular}[c]{@{}c@{}}$n = 40$ \\ $\vert \mathcal{E} \vert = 120$ \end{tabular} & 0.9539 & 0.0189 & 0.2830 & 0.0355 \\ \hline
		\begin{tabular}[c]{@{}c@{}}$n = 50$ \\ $\vert \mathcal{E} \vert = 150$ \end{tabular} & 0.9628 & 0.0168 & 0.2590 & 0.0335 \\ \hline
		\begin{tabular}[c]{@{}c@{}}$n = 30$ \\ $\mathbf{\vert \mathcal{E} \vert = 144}$ \end{tabular} & 0.7997 & 0.0520 & 0.3587 & 0.0524 \\ \hline
		\begin{tabular}[c]{@{}c@{}}$n = 40$ \\ $\mathbf{\vert \mathcal{E} \vert = 256}$ \end{tabular} & 0.7339 & 0.0550 & 0.3688 & 0.0569 \\ \hline
		\begin{tabular}[c]{@{}c@{}}$n = 50$ \\ $\mathbf{\vert \mathcal{E} \vert = 400}$ \end{tabular} & 0.6741 & 0.0487 & 0.3543 & 0.0425 \\ \hline
	\end{tabular}
\end{table}

From these results, first note that the \emph{tightly} distributed
coefficients $a_i$ result in improved $\varepsilon_{L^\star}$ across
the board compared to the \emph{widely} distributed coefficients. We
attribute this to the approximation $LHL \approx
\left(\dfrac{\sqrt{H}L + L\sqrt{H}}{2}\right)^2$ being more accurate
for roughly homogeneous $H = \diag{a_i}$. Next, it is clear that in
the cases with \emph{linearly} scaled edges, $\varepsilon_{L^\star}$
worsens as network size increases.  This is intuitive: the
\textit{proportion} of connected edges in the graph decreases as
network size increases in these cases. This also manifests itself in
the performance gap $\varepsilon_{L^\star} - \varepsilon_A$ shrinking,
indicating the \emph{best-case} solution $\varepsilon_A$ (for which a
valid $L$ does not necessarily exist) degrades even quicker as a
function of network size than our solution $\varepsilon_{L^\star}$. On
the other hand, $\varepsilon_{L^\star}$ substantially improves as
network size increases in the \emph{quadratically} scaled cases, with
a roughly constant performance gap $\varepsilon_{L^\star} -
\varepsilon_A$. Considering this relationship between the linear and
quadratic scalings on $\vert \mathcal{E}\vert$ and the metrics
$\varepsilon_{L^\star}$ and $\varepsilon_A$, we get the impression
that both proportion of connectedness and average node degree play a
role in both the effectiveness of our weight-designed solution
$L^\star$ and the best-case solution.  For this reason, we postulate
that $\varepsilon_{L^\star}$ remains roughly constant in large-scale
applications if the number of edges is scaled subquadratically as a
function of network size; equivalently, the convergence properties of
$\distnewton$ algorithm remain relatively unchanged when using our
proposed weight design and growing the number of communications per
agent sublinearly as a function of $n$.

\subsection{Discrete-Time Distributed Approx-Newton}

Consider solving $\Pc 6$ with $\danad$ for a network of $n = 100$ generators and $\vert \mathcal{E}
\vert = 250$ communication links. The local computations required of each generator
are simple vector operations whose dimension scales linearly with the
network size, which can be implemented on a microprocessor. The graph
topology is plotted in Figure~\ref{fig:graph}. The problem parameters are given by
\begin{equation*}
\begin{aligned}
&f_i(x_i) = \dfrac{1}{2}a_i x_i^2 + b_i x_i + c_i \sin{(x_i + \theta_i)}, \\
&a_i\in \mathcal{U} [2,4], \quad b_i\in \mathcal{U} [-1,1], \\ 
&c_i\in \mathcal{U} [0,1], \quad \theta_i\in \mathcal{U} [0,2\pi], \\
&d = 200, \quad x^0 = (d/n)\ones[n].
\end{aligned}
\end{equation*}
Note that $0 < a_i - c_i \leq \dfrac{\partial^2 f_i}{\partial x_i^2}
\leq a_i + c_i$ satisfies Assumption~\ref{ass:cost}. We compare to
the $\dgd$ and weight design policies for resource allocation described in~\cite{LX-SB:06}, along with 
an ``unweighted'' version of~\cite{LX-SB:06} in the sense that
$L$ is taken to be the degree matrix minus the adjacency matrix of the
graph, followed by the post-scaling described in
Section~\ref{sec:topology-design} to guarantee convergence. The
results are given in Figure~\ref{fig:qcomp}, which show linear
convergence to the optimal value as the number of iterations
increases, with fewer iterations needed for larger $q$. We note a
substantially improved convergence over the $\dgd$ methods, even for the
$q=0$ case which utilizes an equal number of agent-to-agent
communications as $\dgd$. This can be attributed in-part to the superior weight design of our method, which is cognizant of second-order information.

In addition, in Figure~\ref{fig:qcomp} we plot convergence of DGD, weighted by the one-sided design scheme in~\cite{LX-SB:06}, compared to our two-sided design with $q=0$, for cases in which only a universal bound on $\delta_i$, $\Delta_i$ is known (namely, using $\underline{\delta} \leq \delta_i, \Delta_i \leq \overline{\Delta}, \forall i$, as in Remark~\ref{rem:glob-hess-bound}). We note an improved convergence in each case for the locally known bounds versus the universal bound, while the locally weighted DGD method outperforms our $q=0$ two-sided globally weighted method by a slight margin.

\begin{figure}[h]
	\centering
	\includegraphics[scale = 0.35]{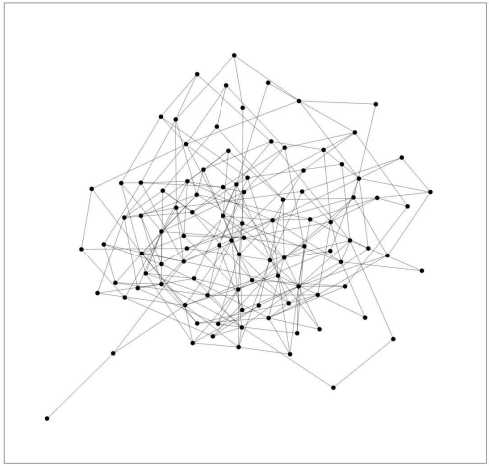}
	\caption{Communication topology used for discrete-time numerical study; $n=100,\vert\E\vert = 250$.}
	\label{fig:graph}
\end{figure}
\begin{figure}[h]
	\centering
	\includegraphics[scale = 0.45]{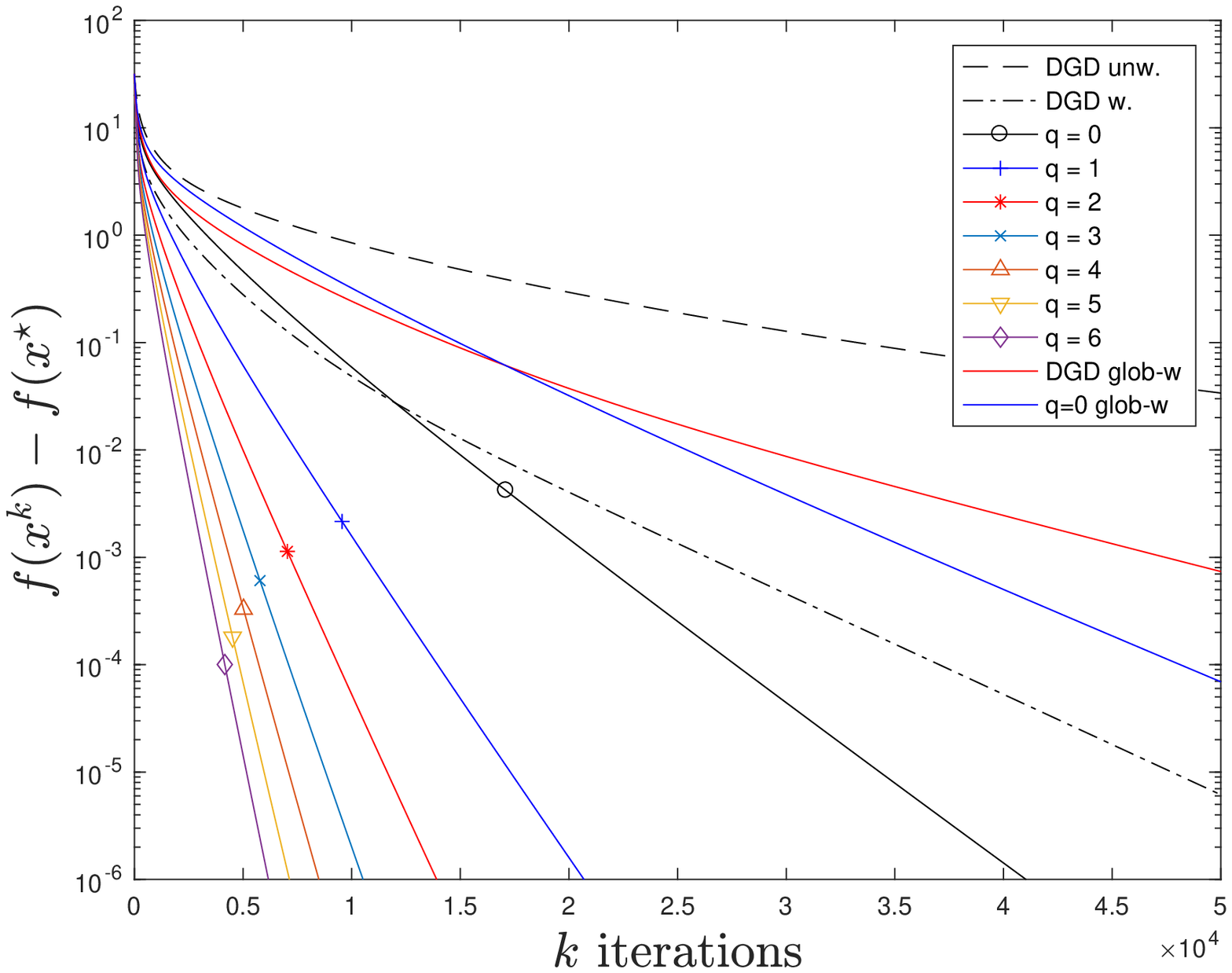}
	\caption{Comparison of weighted and unweighted $\dgd$ versus
		$\distnewtondisc$ with various $q$ for solving $\Pc 6$; $n=100,\vert\E\vert = 250$.}
	\label{fig:qcomp}
\end{figure}
\subsection{Continuous-Time Distributed Approx-Newton}
We now study $\danac$ for solving $\Pc 1$ for
a simple $3$ node network with two edges $\E = \{\{1,2\},
\{2,3\}\}$ for the sake of visualizing trajectories. The problem parameters are given by
\begin{equation*}
\begin{aligned}
&f_1(x_1) = \dfrac{1}{4}x_1^2 + \dfrac{1}{2}x_1, \\
&f_2(x_2) = \dfrac{3}{4}x_2^2 + \dfrac{1}{2}x_2, \\
&f_3(x_3) = 2x_3^2 + \dfrac{1}{2}x_3, \\
&\underline{x} = \begin{bmatrix}0.2 & 2.5 & 1.5\end{bmatrix}^\top,
\quad
\overline{x} = \begin{bmatrix}1 & 6 & 4\end{bmatrix}^\top, \quad d = 6, \\
&x^0 = \begin{bmatrix}5 & -1 & 2\end{bmatrix}^\top, \quad z(0) = \zeros[3], \\
&\underline{\lambda}(0) = \begin{bmatrix} 1.5 & .5 & 0\end{bmatrix},
\quad \overline{\lambda}(0) = \begin{bmatrix}0 & 2 & 1\end{bmatrix}
\end{aligned}
\end{equation*}
Note that $x^0$ is infeasible with respect to
$\underline{x},\overline{x}$; all that we require is it satisfies
Assumption~\ref{ass:initial} (feasible with respect to $d$). We plot the
trajectories of the $3$-dimensional state projected onto the plane
orthogonal to $\ones[3]$ under various $q$. Figure~\ref{fig:newton-traj} shows this, with a zoomed look at the optimizer in Figure~\ref{fig:newton-traj-zoomed}.

\begin{figure}[h]
	\centering
	\includegraphics[scale = 0.45]{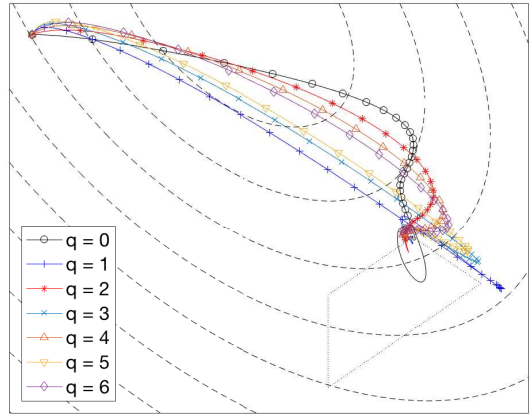}
	\caption{Three node case: projection of $x^0 + Lz(t)\in\real^3$ onto the
		$2$-dimensional plane $\setdef{x}{\sum_i x_i = d}$. Markers
		plotted for $t = 0,0.2,0.4,\dots,5$ seconds. Dashed line
		ellipses indicate intersection of ellipsoid level sets with
		the plane; dotted lines indicate intersection of box
		constraints with the plane.}
	\label{fig:newton-traj}
\end{figure}

\begin{figure}[h]
	\centering
	\includegraphics[scale = 0.45]{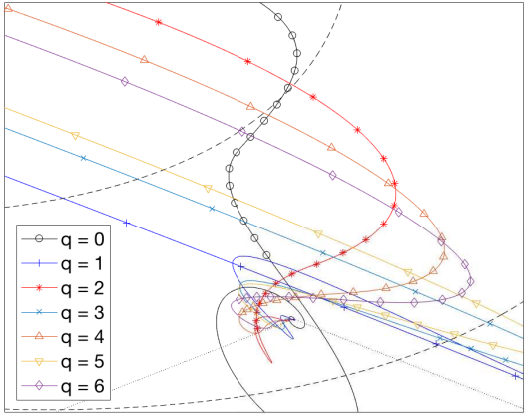}
	\caption{Three node case: trajectories zoomed closer to the optimizer. Markers
		plotted in $0.2s$ increments up to $t = 5s$.}
	\label{fig:newton-traj-zoomed}
\end{figure}

It is clear that choosing $q$ even versus $q$ odd has a qualitative
effect on the shape of the trajectories, as noted in
Section~\ref{ssec:interpretation}. Looking at
Figure~\ref{fig:newton-traj}, it seems the trajectories are intially
pulled toward the unconstrained optimizer (center of the level sets)
with some bias due to $\lambda(0)\neq \zeros[6]$. As $\lambda$ is
given time to evolve, these trajectories are pulled back toward
satisfying the box constraints indicated by the dotted quadrilateral,
i.e. the intersection of the box constraints and the plane defined by
$\setdef{x}{\sum_i x_i = d}$.

For a quantitative comparison, we consider $n = 40$ generators with $\vert\E\vert=156$ communication links whose graph is given by Figure~\ref{fig:graph40} and
the following parameters.
\begin{equation*}
\begin{aligned}
&f_i(x_i) = \dfrac{1}{2}a_i x_i^2 + b_i x_i, \ a_i\in\mathcal{U}[0.5,3], \ b_i\in\mathcal{U}[-2,2],\\
&\underline{x}_i \in\mathcal{U}[1.5,3], \quad
\overline{x}_i \in\mathcal{U}[3,4.5], \quad i\in\until{100}, \\
&d = 120, \ x^0 = 3*\ones[40], \ z(0) = \zeros[40], \ \lambda(0) = \zeros[80].
\end{aligned}
\end{equation*}
\begin{figure}[h]
	\centering
	\includegraphics[scale = 0.45]{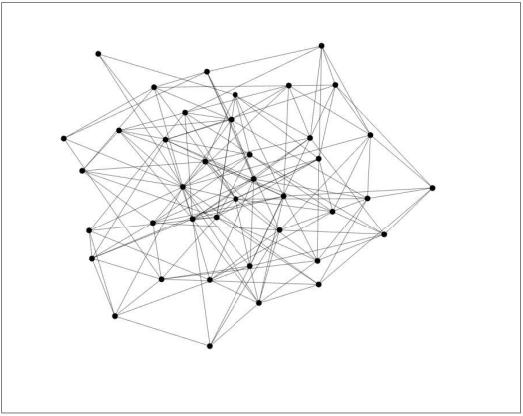}
	\caption{Communication graph for continuous-time numerical study: 40 nodes and 156 edges.}
	\label{fig:graph40}
\end{figure}
\begin{figure}[h]
	\centering
	\includegraphics[scale = 0.45]{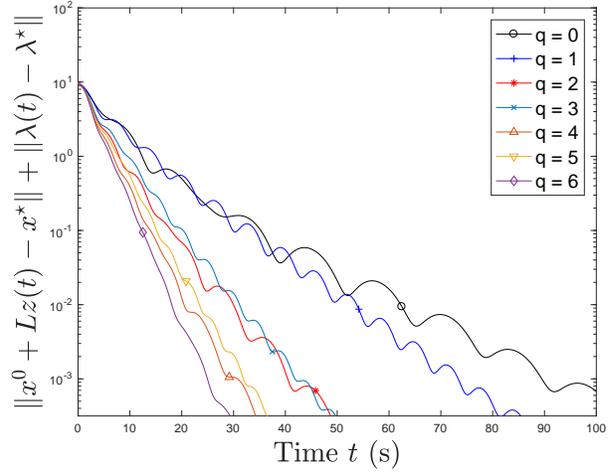}
	\caption{Error in the primal and dual state variables versus time for various $q$; $n=40,\vert\E\vert=156$.}
	\label{fig:errstate_cont}
\end{figure}
\begin{figure}[h]
	\centering
	\includegraphics[scale = 0.45]{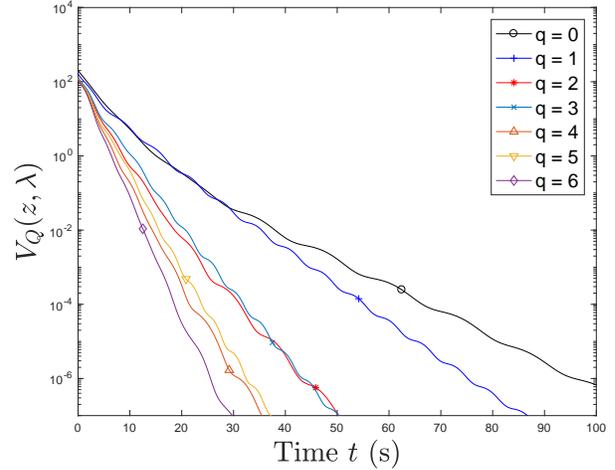}
	\caption{Value of the Lyapunov function $V_Q$ versus time for various $q$; $n=40,\vert\E\vert=156$.}
	\label{fig:Lyap_cont}
\end{figure}
\begin{figure}[h]
	\centering
	\includegraphics[scale = 0.45]{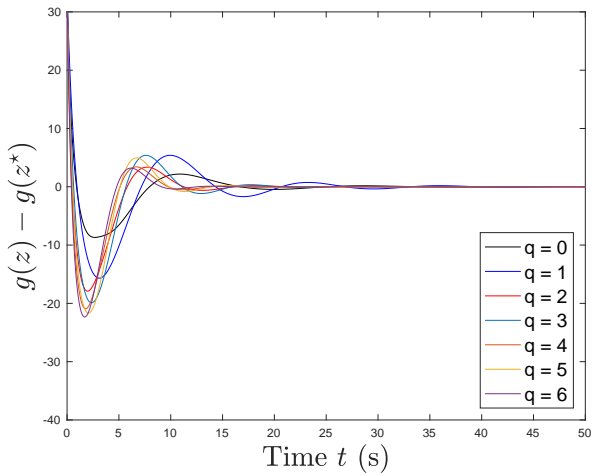}
	\caption{Value of the objective function versus time for various $q$; $n=40,\vert\E\vert=156$.}
	\label{fig:errfn_cont}
\end{figure}

Note from Figure~\ref{fig:errstate_cont} that convergence with respect
to $\Vert x^0 + Lz(t) - x^\star\Vert + \Vert\lambda(t) -
\lambda^\star\Vert$ is not monotonic for some $q$. This is resolved in
Figure~\ref{fig:Lyap_cont} by examining $V_Q$ as defined
by~\eqref{eq:vq}. We also note the phenomenon of faster convergence
for even $q$ over odd $q+1$; the reason for this is related to the
modes of $I_n - LHL$ and was discussed in
Section~\ref{ssec:interpretation}. However, increasing $q$ on a whole
lends itself to superior convergence compared to smaller $q$. As for
the metric $g(z) - g(z^\star)$ in Figure~\ref{fig:errfn_cont}, note
that these values become significantly negative before eventually
stabilizing around zero. The reason for this is simple: in order for
the $\mathscr{Z}_q$ dynamics~\eqref{eq:approx-newt-dynamics} in
$\lambda$ to ``activate," the primal variable must become infeasible
with respect to the box constraints. In this sense, the stabilization
to zero of the plots in Figure~\ref{fig:errfn_cont} represents the
trajectories converging to feasible points of $\Pc 2$.

\subsection{Robust DANA Implementation}\label{ssec:sims-robust-dana}
Lastly, we provide a simulation justification for relaxing
Assumption~\ref{ass:initial} via the method described in
Remark~\ref{rem:robust}. Figure~\ref{fig:robust-state-err} plots the
error in the primal and dual states over time of the modified
``robust" method, which tends to approach zero for all observed values
of $q$, and Figure~\ref{fig:robust-sum-err} demonstrates that the
violation of the equality constraint stablizes to zero very
quickly. Noisy state perturbations are injected at $t=25,50,75$, and
we observe a rapid re-approach to the plane satisfying the equality
constraint. However, even though the algorithm presents a faster
convergence than gradient methods, here do not observe as clear of a
relationship between performance and increased $q$ as in previous
settings. The investigation of the properties of this algorithm is
left as future work.

\begin{figure}[h]
	\centering
	\includegraphics[scale = 0.45]{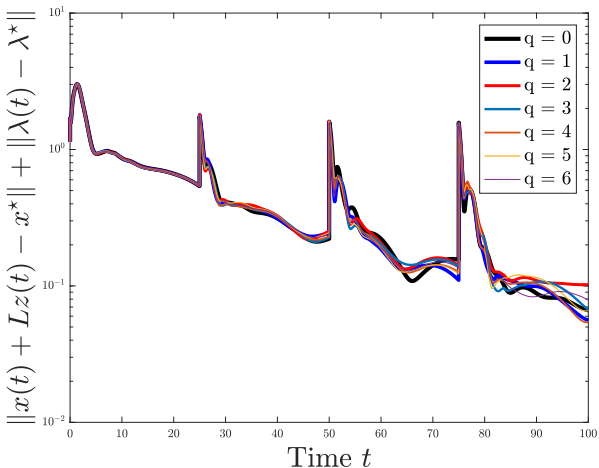}
	\caption{ Error in the primal and dual states for a robust implementation of DANA; $n=20,\vert\E\vert=40$. Initialization does not satisfy Assumption~\ref{ass:initial}, and perturbations are injected at $t=25,50,75$.}
	\label{fig:robust-state-err}
\end{figure}
\begin{figure}[h]
	\centering
	\includegraphics[scale = 0.45]{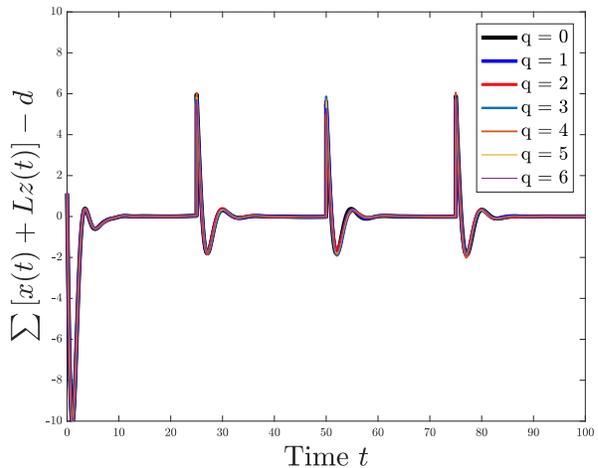}
	\caption{ Violation of the resource constraint over time for robust DANA; $n=20,\vert\E\vert=40$. Perturbations are injected at $t=25,50,75$.}
	\label{fig:robust-sum-err}
\end{figure}

\section{Conclusion and Future Work}
\label{sec:conclusion}

Motivated by economic dispatch problems and separable resource allocation problems in general, this work proposed a class of novel
$\distnewton$ algorithms. We first posed the topology design proplem and provided an
effective method for designing communication weightings.  The
weight design we propose is more cognizant of the problem geometry,
and it outperforms the current literature on network weight
design even when applied to a gradient-like method. Our contribution on the second-order weight design approach is novel but is limited in scope to the given problem formulation. Distributed second-order methods are quite immature in the present literature, so an emphasis of future work is to generalize this weight design notion to a broader class of problems. Ongoing work also includes generalizing the cost functions for box-constrained settings and discretizing the continuous-time algorithm. In addition, we aim to develop distributed Newton-like methods suited to handle more general constraints and design for robustness under uncertain parameters or lossy
communications. Another point of interest is to further study
methods for solving bilinear problems and apply these to weight design within the Newton framework.

\bibliographystyle{abbrv}
\bibliography{alias,SMD-add,SM,JC}

\end{document}